\DeclareMathOperator{\mult}{mult}
\DeclareMathOperator{\rk}{rk}
\DeclareMathOperator{\edim}{edim}
\DeclareMathOperator{\red}{red}
\DeclareMathOperator{\lex}{lex}
\DeclareMathOperator{\rlex}{rlex}
\newcommand{\Q}{\mathbb{Q}}
\newcommand{\N}{\mathbb{N}}
\newcommand{\proj}{\mathbb{P}}
\newcommand{\ocal}{\mathcal{O}}
\theoremstyle{plain}
\newtheorem{theorem}{Theorem}
\newtheorem{lemma}[theorem]{Lemma}
\newtheorem{proposition}[theorem]{Proposition}
\newtheorem{corollary}[theorem]{Corollary}
\newtheorem{algorithm}[theorem]{Algorithm}
\theoremstyle{definition}
\newtheorem{definition}[theorem]{Definition}
\theoremstyle{remark}
\newtheorem{example}[theorem]{Example}
\newtheorem{remark}[theorem]{Remark}
\numberwithin{theorem}{section}
\numberwithin{equation}{section}
\numberwithin{figure}{section}
\begin{document}
\title{New methods for determining speciality of linear systems based at fat points in $\proj^n$}
\author{Stepan Paul}
\maketitle

\begin{abstract}
In this paper we develop techniques for determining the dimension of linear systems of divisors based at a collection of general fat points in $\proj^n$ by partitioning the monomial basis for $H^0(\ocal_{\proj^n}(d))$. The methods we develop can be viewed as extensions of those developed by Dumnicki. We apply these techniques to produce new lower bounds on multi-point Seshadri constants of $\proj^2$ and to provide a new proof of a known result confirming the perfect-power cases of Iarrobino's analogue to Nagata's Conjecture in higher dimension.
\end{abstract}

\vskip18pt

Let $\mathbb K$ be a field of characteristic $0$. Given general points $p_1,\ldots, p_r$ in $\proj^n=\mathbb K\proj^n$, some $\mathbf m=(m_1,\ldots,m_r)\in\N^r$, and $d\geq1$, we are interested in determining whether there exists a degree-$d$ hypersurface with multiplicity at least $m_i$ at $p_i$ for all $i$. It is an open problem to formulate a general, definitive, and computationally succinct method for answering this question. 

Let $V^n(d)$ be the vector space of homogeneous degree-$d$ polynomials in $n+1$ variables, and, let $V^n(d,\mathbf m)$ be the subspace consisting of polynomials which vanish with the prescribed multiplicities at general points. Then we may approach the problem by seeking conditions which determine when the data of $(n,d,\mathbf m)$ is \emph{special}; that is, when it fails to satisfy
$$\dim V^n(d,\mathbf m)={d+n\choose n}-\sum{m_i+n-1\choose n}.$$
Determining these conditions is an area of active research, with many partial results, conjectures, and computational techniques.

For $n=2$, Nagata studied the homogeneous case of $\mathbf m=(m,\ldots,m)$ in \cite{MR0088034} in order to produce a counterexample to Hilbert's 14th Problem. In that paper, he formulated the conjecture bearing his name, which states that if $r>9$, $d^2\leq rm^2$, then $V^2(d,(m,\ldots,m))=0$, and proved the case where $r$ is a perfect square. The Harbourne-Hirschowitz Conjecture \cite{MR846019,MR993223} generalizes Nagata's by proposing specific criteria for $(2,d,\mathbf m)$ to be special (see \cite{MR2098342} for a nice explanation). Papers by Dumnicki and Jarnicki prove the Harbourne-Hirschowitz Conjecture for homogeneous multiplicities $m\leq42$ in \cite{MR2289179}, and the general case with all multiplicities no more than $11$ in \cite{MR2325918}. The $2$-dimensional case is of particular interest because of its connection to multi-point Seshadri constants of $\proj^2$ and the problem of determining the ample  cone for rational surfaces (see for example \cite{MR2555949}).

For $n\geq3$, there are some related conjectures, including one from Iarrobino proposing a higher dimensional analogue of Nagata's Conjecture in \cite{MR1337187}, which we address in Section \ref{results}. In any dimension, the problem of determining speciality is also applicable to Hermite interpolation (see for example \cite{MR2342565}).

Given any particular set of data $(n,d,\mathbf m)$, there is a definitive ``brute force'' matrix rank computation for determining $\dim V^n(d,\mathbf m)$, which we summarize in Section \ref{linalgsec}.
In \cite{MR2289179,MR2342565}, Dumnicki introduces the idea of taking nested subsets of the monomial basis of $V^n(d)$, which we index by a set $D(d)$, to recursively reduce this computation to finding the ranks of smaller matrices, many of which are shown to be nonsingular \emph{a fortiori} by combinatorial arguments.

In Sections \ref{partitionssec} and \ref{generalizedreductionalgorithms}, we prove Theorems \ref{maintheorem} and \ref{genredalgtheorem}, which show that these procedures can be generalized by instead considering a larger class of partitions of $D(d)$.

In Section \ref{constructions} we describe some constructions for these partitions, expand on Dumnicki's combinatorial criteria for nonsingularity, and thereby produce new algorithms for determining speciality. Our two main theoretical tools here for constructing partitions of the correct kind are Theorem \ref{corfirst} and Lemma \ref{anynaf}. The former constructs partitions using monomial orderings on $V^n(d)$, and the latter describes a useful class of partitions which satisfy the combinatorial conditions.

In Section \ref{results} we apply these methods towards producing new bounds on multi-point Seshadri constants of $\proj^2$ (summarized in Figure \ref{lowerbounds}), and recovering a theorem of Evain from \cite{MR2125451} which proves the perfect-power cases of Iarrobino's Conjecture, generalizing the known perfect-square cases of Nagata's Conjecture to higher dimensions.

\section{The Linear Algebra Set-Up}\label{linalgsec}

We will work over a field $\mathbb K$ of characteristic $0$. Given $m\geq0$, $n\geq 1$, $d\geq1$, there exists a natural sheaf homomorphism
$$\Pi(n,d,m):\mathcal J^m(d)\longrightarrow\ocal_{\proj^n}(d)$$
where $\mathcal{J}^m(d)$ is the $m$th jet bundle of $\ocal_{\proj^n}(d)$.

Define $V^n(d)=H^0\ocal_{\proj^n}(d)$. Then by $\gamma(n,d,m)$ we denote the natural prolongation map
$$\gamma(n,d,m):V^n(d)\longrightarrow H^0\mathcal J^m(d).$$

To aid notation, if $\mathbf a=(a_0,\ldots,a_n)\in\N^{n+1}$, we define $|\mathbf a|=a_0+\cdots+a_n$, $D(d)=\left\{\left.\mathbf a\in\N^{n+1}\right||\mathbf a|=d\right\}$, and $\mathbf{X^a}=X_0^{a_0}\cdots X_n^{a_n}$. We also notice that $D(d)$ can be visualized as the integral points of $d$ times the standard $n$-simplex. For $n=2$, we can illustrate $D(d)$ in a triangle as shown in Figure \ref{d7}. If $n$ is unclear from context, we may write $D^n(d)$.

\begin{figure}
\centering
\includegraphics{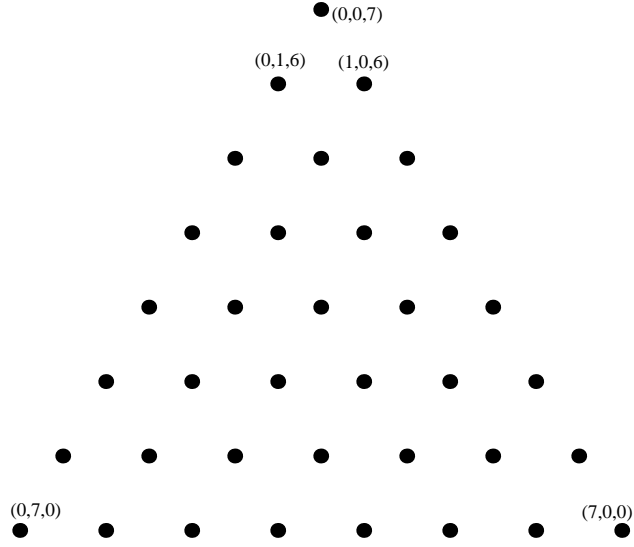}
\caption{An illustration of $D(7)$. In subsequent illustrations, we will omit the vertex labels.}\label{d7}
\end{figure}

We can identify $V^n(d)$ with the vector space of homogeneous degree-$d$ polynomials in $\mathbb K[\mathbf X]=\mathbb{K}[X_0,\ldots,X_n]$, a $d+n\choose n$-dimensional space. Note that $V^n(d)$ has a natural basis consisting of monomials $\left\{\mathbf{X^a}\left|\mathbf a\in D(d)\right.\right\}$.

Also, we can identify $H^0\mathcal J^{m}(d)$ with a subspace of
$$V^n(d-m+1)\otimes_{\mathbb K}\mathbb K^{D(m-1)}$$
by thinking of a section of the jet bundle $\mathcal J^{m}(d)$ as the tuple of all order-$(m{-}1)$ partial derivatives, which are indexed by $D(m-1)$, of a polynomial in $V^n(d)$. Each of these derivatives is a homogeneous degree-$(d{-}m{+}1)$ polynomial, say in $\mathbb K[\mathbf P]$.

Given these identifications, $\gamma(n,d,m)$ sends a homogeneous degree-$d$ polynomial to the ordered set of its order-$(m{-}1)$ partial derivatives.
Specifically,
$$\gamma(n,d,m)(f)=\left(\left.\frac{\partial^{|\mathbf b|}f}{\mathbf{\partial X^b}}(\mathbf P)\right|\mathbf b\in D(m-1)\right).$$

Now $\gamma(n,d,m)$ is represented by the matrix $M(m)$ with columns indexed by $D(d)$, rows indexed by $D(m-1)$, and polynomial entries in $V^n(d-m+1)$ via
\begin{equation}\label{defineM}
M(m)_{[\mathbf a,\mathbf b]}=\frac{\partial^{|\mathbf b|}\mathbf{X^a}}{\mathbf{\partial X^b}}(\mathbf P)=\left(\prod_{j=0}^{b_1-1}a_1-j\right)\cdots\left(\prod_{j=0}^{b_n-1}a_n-j\right)\mathbf P^{\mathbf{a-b}}.
\end{equation}

For any $p\in\proj^n$, there is a natural evaluation map
$$\nu_{p}:H^0\mathcal J^{m}(d)\longrightarrow J^m_{p}(d)$$
where $J^m_{p}(d)$ is the jet space over $p$---that is, the fiber of $\mathcal J^m(d)$ at $p$.

Now, if $r\geq1$ and $\mathbf m=(m_1,\ldots,m_r)\in\N^r$, let $\mathcal{J}=\bigoplus_{i=1}^r\mathcal{J}^{m_i}(d)$, and define $\Pi$ to be the sheaf homomorphism
$$\Pi=\sum_{i=1}^r\Pi(n,d,m_i):\mathcal{J}\longrightarrow\ocal_{\proj^n}(d).$$
Therefore we have $H^0\mathcal J=\bigoplus_{i=1}^r H^0\mathcal J^{m_i}(d)$. To keep track of the fact that the sum is direct, we put an additional index on the indeterminates of polynomials in $H^0\mathcal J^{m_i}(d)$. In particular, we think of $H^0\mathcal J^{m_i}(d)$ as consisting of tuples of polynomials in $\mathbb K[P_{i,0},\ldots,P_{i,n}]=\mathbb K[\mathbf P_i]$ so that $M(m_i)$ is a matrix with entries in $\mathbb K[\mathbf P_i]$.

We then define
$$\gamma=\left(\gamma(n,d,m_1),\ldots,\gamma(n,d,m_r)\right):V^n(d)\longrightarrow H^0\mathcal J.$$
Then $\gamma$ can then be represented by the matrix $M=M(\mathbf m)$ defined by
$$M=\left[\begin{array}{c}
M(m_1)\\
\vdots\\
M(m_r)
\end{array}\right].$$

To aid notation, we let $U_i=\{(i,\mathbf b)|\mathbf b\in D(m_i-1)\}$ and $U=\bigcup_{i=1}^r U_i$ so that the rows of $M$ are indexed by $U$. In particular, we can say 
$$M_{[\mathbf a,(i,\mathbf b)]}=M(m_i)_{[\mathbf a,\mathbf b]}.$$

Basically, $M$ multiplies the coefficient vector of a polynomial in $V^n(d)$ on the right, and yields a collection of polynomials, which are indexed by $U$, in the variables $\{P_{i,j}\}$.

For any $r$-tuple of points $p_1,\ldots,p_r\in\proj^n$, we have an evaluation map
$$\nu_{p_1,\ldots,p_r}:H^0\mathcal J\longrightarrow\bigoplus_{i=1}^r J^{m_i}_{p_i}(d)$$
whose components are the evaluation maps $\nu_{p_i}$ on $H^0\mathcal J^{m_i}(d)$.

Define $V^n(d,(m_1p_1,\ldots,m_rp_r))$ to be the kernel of $\nu_{p_1,\ldots, p_r}\gamma$---the space of sections of $\ocal_{\proj^n}(d)$ vanishing with multiplicity at least $m_i$ at $p_i$ for each $i$. If the points $p_i$ are taken to be general, we suppress them in the notation as $V^n(d,(m_1,\ldots,m_r))=V^n(d,\mathbf m)$. 

\begin{definition}
We say a section $f=\sum_{\mathbf a\in D(d)}\kappa_{\mathbf a}\mathbf{X^a}\in V^n(d)$ is \emph{supported} on a subset $D$ of $D(d)$ if $\kappa_{\mathbf a}=0$ whenever $\mathbf a\notin D$. We let $V^n_D$ be the subspace of $V^n(d)$ of those sections supported on $D$. We make the analogous definitions for $V^n_D(m_1p_1,\ldots,m_rp_r)$ and $V^n_D(m_1,\ldots,m_r)$.
\end{definition}

We also define $\gamma_D=\gamma|_{V^n_D(\mathbf m)}$, which is represented by the sub-matrix $M_D(\mathbf m)$ of $M(\mathbf m)$ containing only those columns indexed by elements $\mathbf a\in D$. We then have 
$$\ker (\nu_{p_1,\ldots,p_r}\gamma_D)=V_D^n(d,(m_1p_1,\ldots,m_rp_r)).$$

The following proposition allows us to study $V^n_D(\mathbf m)$, where the points are taken to be general, by focusing on the matrix $M_D(\mathbf m)$ with polynomial entries.

\begin{proposition}
For general points $p_1,\ldots,p_r\in\proj^n$, 
$$\rk\gamma_D=\rk (\nu_{{p_1},\ldots,{p_r}}\gamma_D).$$
\end{proposition}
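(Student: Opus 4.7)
The plan is to interpret $\rk \gamma_D$ as the rank of the polynomial matrix $M_D$ over the fraction field $\mathbb K(\mathbf P_1,\ldots,\mathbf P_r)$, i.e.\ its \emph{generic} rank, and then deduce the proposition from the standard fact that the rank of a polynomial matrix is lower-semicontinuous on the parameter space and attains its generic value on a nonempty Zariski open subset. Concretely, $\rk(\nu_{p_1,\ldots,p_r}\gamma_D)$ is just the ordinary rank of the scalar matrix obtained by substituting the chosen affine representatives of the $p_i$ for the indeterminates $\mathbf P_i$ in $M_D$, and we want to show this rank is constant on a dense open subset of $(\proj^n)^r$.

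First I would let $k = \rk \gamma_D$, the rank of $M_D$ viewed as a matrix with entries in $\mathbb K(\mathbf P_1,\ldots,\mathbf P_r)$. By definition of matrix rank over a field, some $k \times k$ submatrix of $M_D$ has a determinant $\Delta \in \mathbb K[\mathbf P_1,\ldots,\mathbf P_r]$ that is a nonzero polynomial, and every $(k{+}1) \times (k{+}1)$ minor of $M_D$ is the zero polynomial. The second statement immediately gives $\rk(\nu_{p_1,\ldots,p_r}\gamma_D) \leq k$ for every choice of $p_1,\ldots,p_r$, because the $(k{+}1)$-minors of the evaluated matrix are obtained by substituting coordinates into polynomials that are identically zero.

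Next I would produce the reverse inequality on a general locus. Since each entry of $M_D$ is a homogeneous polynomial separately in each block of variables $\mathbf P_i$ (by inspection of \eqref{defineM}), so is $\Delta$. Hence the non-vanishing locus $\{\Delta \neq 0\}\subset (\mathbb A^{n+1}\setminus\{0\})^r$ descends to a nonempty Zariski open subset $U \subset (\proj^n)^r$. For $(p_1,\ldots,p_r) \in U$, the chosen $k \times k$ submatrix evaluates to a scalar matrix with nonzero determinant, so $\rk(\nu_{p_1,\ldots,p_r}\gamma_D) \geq k$. Combining the two inequalities gives equality on $U$, which is precisely what is meant by the conclusion holding for general points.

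The only step that requires any care is the homogeneity/well-definedness issue: a priori the nonvanishing locus of $\Delta$ is a subset of affine space, and one must check that it corresponds to an honest nonempty open subset of $(\proj^n)^r$. This is where the structure $M(m)_{[\mathbf a,\mathbf b]} = (\text{constant})\cdot \mathbf P^{\mathbf a - \mathbf b}$ is used: each row of $M_D$ is homogeneous of a fixed degree in the appropriate block $\mathbf P_i$, so any minor of $M_D$ is multi-homogeneous in the blocks $\mathbf P_1,\ldots,\mathbf P_r$, and its vanishing locus is invariant under independent rescalings of each block. Hence $\{\Delta \neq 0\}$ is the preimage of a nonempty open subset of $(\proj^n)^r$, completing the argument.
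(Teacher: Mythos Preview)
Your proof is correct and follows essentially the same approach as the paper: both arguments pick a nonvanishing maximal minor $\Delta$ of $M_D$ and evaluate at general points where $\Delta\neq 0$. The paper handles the inequality $\rk(\nu_{p_1,\ldots,p_r}\gamma_D)\leq\rk\gamma_D$ in one line by noting that $\nu_{p_1,\ldots,p_r}$ is a homomorphism (so composition cannot increase rank), whereas you obtain it from the identical vanishing of all $(k{+}1)\times(k{+}1)$ minors; you are also more explicit about the multi-homogeneity needed to descend the open locus to $(\proj^n)^r$, which the paper leaves implicit.
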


If we de-homogenize the system---say, by setting the $X_i$ coordinate to $1$---then the $n=2$ case is Dumnicki's Proposition 9 in \cite{MR2289179}. The proof here is essentially the same.

\begin{proof}
Since $\nu_{p_1,\ldots,p_r}$ is a homomorphism, it suffices to prove that $\rk\gamma_D\leq\rk(\nu_{p_1,\ldots,p_r}\gamma_D)$. 

The rank of $\gamma_D$ is the size of the largest minor of $M=M_D(\mathbf m)$ which is not identically zero as a polynomial---call this polynomial $\mu$. The evaluation of $\mu$ at general nonzero points $\hat{p_1},\ldots,\hat{p_r}\in\mathbb K^{n+1}$ is then also nonzero. 

Let $\hat M$ be the matrix with scalar entries obtained by evaluating each entry of $M$ at the points $\hat{p_1},\ldots,\hat{p_r}$. Letting $p_i$ be the point in $\proj^n$ over which $\hat{p_i}$ lies, we can (non-canonically) identify 
$\bigoplus_{i=1}^r J^{m_i}_{p_i}(d)$ with $\mathbb K^U$ so that $\hat M$ is the matrix representing $\nu_{{p_1},\ldots,{p_r}}\gamma_D$.

We then see that the corresponding minor of $\hat M$ is exactly $\mu(\hat{p_1},\ldots,\hat{p_r})$, which is known to be nonzero, and so $\hat M$ has at least the same rank as $M$.
\end{proof}

\begin{corollary}\label{rankM}
In general, we have
$$\dim V_D^n(\mathbf m)=\# D-\rk M_D(\mathbf m).$$
\end{corollary}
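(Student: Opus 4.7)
The plan is to apply the rank-nullity theorem to the composition $\nu_{p_1,\ldots,p_r}\gamma_D$ for general $p_1,\ldots,p_r$, and then convert the rank of this linear map into the rank of the polynomial matrix $M_D(\mathbf m)$ using the preceding proposition.

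First I would recall that by definition $V_D^n(\mathbf m) = \ker(\nu_{p_1,\ldots,p_r}\gamma_D)$, and that the domain of this composition is $V_D^n(d)$, which has dimension $\#D$ because the monomials $\{\mathbf{X^a}\mid \mathbf a\in D\}$ are a basis. Rank-nullity then gives
\[
\dim V_D^n(\mathbf m) \;=\; \#D - \rk(\nu_{p_1,\ldots,p_r}\gamma_D).
\]

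Next I would invoke the preceding proposition, which asserts that for general $p_1,\ldots,p_r$ we have $\rk(\nu_{p_1,\ldots,p_r}\gamma_D) = \rk \gamma_D$. The rank of $\gamma_D$, viewed as a linear map represented by the polynomial matrix $M_D(\mathbf m)$, is exactly the size of the largest minor of $M_D(\mathbf m)$ that does not vanish identically as a polynomial in the $P_{i,j}$; this is what the corollary calls $\rk M_D(\mathbf m)$. Substituting gives the claimed equality.

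There is essentially no obstacle beyond matching conventions: the only subtlety is confirming that ``rank'' of the polynomial matrix $M_D(\mathbf m)$ is being used in the same sense as ``rank'' of the linear map $\gamma_D$ it represents, and this identification is precisely the content of the proposition just proved. Since the genericity of the points is already absorbed into that proposition, no further argument about general position is required here.
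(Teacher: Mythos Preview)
Your argument is correct and is exactly the intended one: the paper states this result as an immediate corollary of the preceding proposition, with the implicit reasoning being rank--nullity applied to $\nu_{p_1,\ldots,p_r}\gamma_D$ on the $\#D$-dimensional space $V_D^n$, followed by the identification $\rk(\nu_{p_1,\ldots,p_r}\gamma_D)=\rk\gamma_D=\rk M_D(\mathbf m)$ for general points. There is nothing to add.
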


We will use the word \emph{triple} to refer to the data of $(n,D,\mathbf m)$ with the understanding that $n\geq1$, $D\subseteq D^n(d)$ for some $d\geq 1$, and $\mathbf m\in\N^{r}$ for some $r\geq 1$.

\begin{definition}
We call a triple $(n,D,\mathbf m)$ \emph{non-special} if the following equivalent conditions are met.
\begin{enumerate}
\item $M_D(\mathbf m)$ has full rank;
\item If $\dim V^n_D(\mathbf m)$ has the \emph{expected dimension} of 
$$\edim(n,D,\mathbf m)=\max\left\{\#D-\#U,0\right\}.$$
\end{enumerate}
A triple is \emph{special} if it is not non-special. If $n$ and $\mathbf m$ are understood, we may call $D$ special or non-special as well.
\end{definition}

Notice that this definition specializes to the one given in the introduction when $D=D(d)$ since $\#D(d)={d+n\choose n}$.

\begin{remark}
We point out that the definition splits depending on the sign of $\#D-\#U$. In particular
\begin{enumerate}
\item if $\#D<\#U$, then we say $(n,D,\mathbf m)$ is \emph{over-determined}, and it is non-special if and only if $V^n_D(\mathbf m)=0$;
\item if $\#D>\#U$, then we say $(n,D,\mathbf m)$ is \emph{under-determined}, and it is non-special if and only if $\dim V^n_D(\mathbf m)=\#D-\#U$;
\item if $\#D=\#U$, then we say $(n,D,\mathbf m)$ is \emph{well-determined}, and is non-special if and only if $\dim V^n_D(\mathbf m)=\#D-\#U=0$.
\end{enumerate}
By definition, we always have 
\begin{equation}\label{intersection}
V^n(d,(m_1p_1,\ldots,m_rp_r))=\bigcap_{i=1}^rV^n(d,m_ip_i).
\end{equation}
Another characterization of speciality for under- or well-defined triples is that a triple is non-special exactly when there are points $p_i$ general enough so that the codimension of the lefthand side is equal to the sum of the codimensions of the spaces being intersected on the righthand side.
\end{remark}

\section{Partitions of Monomials}\label{partitionssec}

Here we present a generalization of Dumnicki and Jarnicki's notion of ``reduction'' from \cite{MR2289179, MR2543429, MR2325918}. The content of this generalization is that instead of reducing one point at a time, we may reduce by several at once. Our notation will also differ slightly from the papers cited because we do not de-homogenize our polynomials by choosing an affine chart. Instead we opt to preserve the symmetry afforded by working over all of $\proj^n$, which will be put to use in Section \ref{constructions}.

As a bit of notation, if $A$ is any matrix with rows indexed by $I$ and columns indexed by $J$, we will write $(I',J')$ to denote the sub-matrix with rows in $I'\subseteq I$ and columns in $J'\subseteq J$. As a convention, we will set $\det(\varnothing,\varnothing)=1$.

\begin{lemma}[A Generalized Laplace Rule (GLR)]\label{glr}
Let $A$ be any square matrix with rows indexed by (an ordered set) $I$ and columns indexed by (an ordered set) $J$, and let $(I_1,\ldots,I_s)$ be a partition of $I$. Let $\mathcal{P}$ be the set of partitions $(J_1,\ldots,J_s)$ of $J$ with $\#I_i=\#J_i$ for all $i$. Then
\begin{equation}
\det A=\sum_{(J_1,\ldots,J_s)\in\mathcal{P}}\pm\left(\prod_{i=1}^s\det(I_i,J_i)\right)\label{glreqn}
\end{equation}
\end{lemma}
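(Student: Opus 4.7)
The plan is to prove the identity by induction on $s$, the number of blocks in the partition of $I$, using the classical Laplace expansion as both base case and inductive engine. For $s=1$ there is nothing to show: the only partition in $\mathcal{P}$ is $(J)$ itself and the right-hand side reduces to $\det A$. For $s=2$, the identity is the standard Laplace expansion along the block of rows $I_1\subseteq I$,
\[
\det A=\sum_{\substack{J_1\subseteq J\\ \#J_1=\#I_1}}\pm\det(I_1,J_1)\cdot\det(I_2,J\setminus J_1),
\]
which I will invoke as a known fact.

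For the inductive step, given a partition $(I_1,\ldots,I_s)$ with $s\geq 3$, I first apply the $s=2$ case to the coarser partition $(I_1,\, I_2\cup\cdots\cup I_s)$, which writes $\det A$ as a signed sum indexed by subsets $J_1\subseteq J$ with $\#J_1=\#I_1$ of products $\det(I_1,J_1)\cdot\det(I_2\cup\cdots\cup I_s,\, J\setminus J_1)$. The second factor is the determinant of a square sub-matrix whose rows are indexed by the ordered disjoint union $I_2\cup\cdots\cup I_s$, so the inductive hypothesis applied to that sub-matrix with the partition $(I_2,\ldots,I_s)$ rewrites it as a signed sum over partitions $(J_2,\ldots,J_s)$ of $J\setminus J_1$ with $\#J_k=\#I_k$. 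Substituting and observing that the joint data $(J_1,J_2,\ldots,J_s)$ ranges exactly over $\mathcal{P}$ yields the claimed formula.

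The one point to watch is the accounting of signs: at each stage the Laplace rule contributes a sign recording how $I_k$ and $J_k$ sit inside the current orderings, and these must compose coherently across levels of the recursion. Since the statement hides this in the symbol $\pm$, we only need to know that the signs exist and depend solely on the partition $(J_1,\ldots,J_s)$, not on individual entries of $A$. A cleaner one-shot alternative that makes this manifest is to argue directly from the permutation definition $\det A=\sum_\sigma \sgn(\sigma)\prod_i A_{i,\sigma(i)}$: each bijection $\sigma\colon I\to J$ determines a unique $(J_1,\ldots,J_s)\in\mathcal{P}$ by $J_k=\sigma(I_k)$, and conversely such a partition together with a tuple of bijections $\sigma_k\colon I_k\to J_k$ reconstructs $\sigma$. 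Grouping the sum by the induced partition, the restricted bijections $\sigma_k$ range independently over all bijections of their blocks, so the inner sums produce $\prod_k\det(I_k,J_k)$, while the residual sign is that of the shuffle permutation reassembling $J_1\cup\cdots\cup J_s$ into $J$ (and analogously for $I$), which depends only on the partition. The main obstacle is exactly this sign bookkeeping; either route handles it, and the inductive one has the advantage of never forcing an explicit sign formula.
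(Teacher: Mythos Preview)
Your proof is correct and follows exactly the approach the paper takes: the paper's entire proof is the one-line remark ``Recursively use the Generalized Laplace Rule for $s=2$,'' which is precisely your induction on $s$ with the classical Laplace expansion as base case. Your additional direct argument from the permutation definition and your care with the sign bookkeeping go beyond what the paper supplies, but the core strategy is the same.
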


\begin{proof}
Recursively use the Generalized Laplace Rule for $s=2$.
\end{proof}

Given a triple $(n,D,\mathbf m)$, define $U$, $U_i$, and $M=M_D(\mathbf m)$ as above. Then let $(U',D')$ be some square sub-matrix of $M$, and define $U_i':=U'\cap U_i$. Finally let $\mathcal P(U',D')$ be the set of partitions $\mathbf E=(E_1,\ldots,E_r)$ of $D'$ with $\#E_i=\#U_i'$ for all $i$. Then the GLR gives us
\begin{equation}\label{applyglr}
\det(U',D')=\sum_{\mathbf E\in\mathcal P(U',D')}\pm\left(\prod_{i=1}^r\det(U_i',E_i)\right).
\end{equation}

In this situation, we will refer to the summand associated to $\mathbf E$ in (\ref{applyglr}) as $\sigma(\mathbf E)$. For any $\mathbf E\in\mathcal P(U',D')$, we can compute directly from (\ref{defineM}) that, for some scalar $\kappa$,
\begin{eqnarray}\label{summandformula}
\det(U_i',E_i)=\kappa\mathbf P_i^{\mathbf a_i(\mathbf E)-\mathbf b_i}\\
\nonumber\\
\mathbf a_i(\mathbf E):=\sum_{\mathbf a\in E_i}\mathbf a,\qquad \mathbf b_i:=\sum_{(i,\mathbf b)\in U_i'}\mathbf b\nonumber
\end{eqnarray}
In particular, (\ref{summandformula}) is either zero or has one term as a polynomial. Hence $\sigma(\mathbf E)$ is some scalar multiple of the monomial
\begin{equation}\label{theterm}
\mathbf P_1^{\mathbf a_1(\mathbf E)-\mathbf b_1}\cdots \mathbf P_r^{\mathbf a_r(\mathbf E)-\mathbf b_r}
\end{equation}
Notice that the $\mathbf b_i$ depend only on the choice of $U'\subseteq U$, and not on the partition $\mathbf E$.

\begin{definition}\label{exceptional}
Given a triple $(n,D,\mathbf m)$ and a square sub-matrix $(U',D')$ of $M$, we call a partition $\mathbf E\in\mathcal P(U',D')$ \emph{exceptional} (with respect to $(U',D')$) if it satisfies the properties
\begin{enumerate}
\item\label{nondegen} $\sigma(\mathbf E)\neq0$.
\item\label{unique} If $D''\subseteq D$ has $\#D''=\#D'$, and $\mathbf F\in\mathcal P(U',D'')$ is a different partition with $\mathbf a_i(\mathbf F)=\mathbf a_i(\mathbf E)$ for all $i$, then $\sigma(\mathbf F)=0$.
\end{enumerate}

If additionally, $(U',D')$ is a maximal square sub-matrix of $M$, then we call $\mathbf E$ a \emph{fully exceptional partition}.

In the case where $U'\subseteq U_r$, so that $\mathbf E=(\varnothing,\ldots,\varnothing,E_r)$, we call the partition (or just $E_r$) a \emph{reduction}.
\end{definition}

\begin{remark}
Notice that if $(n,D,\mathbf m)$ is over- (respectively under-, well-) determined, then $(U',D')$ is maximal if and only if $D'=D$ (respectively $U'=U$, $(U',D')=M$).

One fact to keep in mind is that if $\#E_i=\#F_i$, then $\mathbf a_i(\mathbf E)=\mathbf a_i(\mathbf F)$ if and only if the centroid of the points in $E_i$ is the same as the centroid of the points in $F_i$. This is a visual trick which may be helpful for looking at examples.
\end{remark}

We now state the first main theoretical result.

\begin{theorem}\label{maintheorem}
Suppose $(n,D,\mathbf m)$ admits an exceptional partition with respect to $(U',D')$ with $U'\subseteq U_{k+1}\cup\cdots\cup U_r$ for some $k\leq r$. Then
$$\dim V^n_D(m_1,\ldots,m_r)\leq\dim V^n_{D\smallsetminus D'}(m_1,\ldots,m_k).$$
In particular, if $k=0$, then $\dim V^n_D(\mathbf m)\leq\#(D\smallsetminus D')$.
\end{theorem}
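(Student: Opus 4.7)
The plan is to use Corollary \ref{rankM} to convert the target dimension inequality into the equivalent rank inequality
$$\rk M_D(\mathbf m) \geq \#D' + \rk M_{D\smallsetminus D'}(m_1,\ldots,m_k),$$
and then exhibit a sufficiently large nonzero minor of $M_D(\mathbf m)$. Setting $s := \rk M_{D\smallsetminus D'}(m_1,\ldots,m_k)$, I would first pick a nonzero $s\times s$ minor $(U'',D'')$ of $M_{D\smallsetminus D'}(m_1,\ldots,m_k)$, so that $U''\subseteq U_1\cup\cdots\cup U_k$ and $D''\subseteq D\smallsetminus D'$. The target is then to show that the square sub-matrix $(U''\cup U',\,D''\cup D')$ of $M_D(\mathbf m)$, of size $s+\#D'$, has nonzero determinant, which would immediately give the rank bound.

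The key structural fact I would leverage is that entries in rows $U''$ lie in $\mathbb K[\mathbf P_1,\ldots,\mathbf P_k]$ while entries in rows $U'$ lie in $\mathbb K[\mathbf P_{k+1},\ldots,\mathbf P_r]$, since the two row index sets are drawn from disjoint ranges of $U_i$'s. I would therefore regard the determinant as a polynomial in $\mathbf P_{k+1},\ldots,\mathbf P_r$ with coefficients in $\mathbb K[\mathbf P_1,\ldots,\mathbf P_k]$, and isolate the coefficient of the specific monomial
$$\mathbf m^{*} \,:=\, \mathbf P_{k+1}^{\mathbf a_{k+1}(\mathbf E)-\mathbf b_{k+1}}\cdots \mathbf P_r^{\mathbf a_r(\mathbf E)-\mathbf b_r},$$
which appears in $\sigma(\mathbf E)$ with nonzero scalar $\kappa_{\mathbf E}$ by condition (\ref{nondegen}). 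Applying Lemma \ref{glr} first with the row-partition $(U'',U')$, and then reapplying it to each factor $\det(U',D_B)$ with the row-partition $(U'_{k+1},\ldots,U'_r)$, expresses the determinant as a signed sum of terms of the form $\det(U'',D_A)\cdot\sigma(\mathbf F)$, indexed by decompositions $D''\cup D'=D_A\sqcup D_B$ with $\#D_A=\#U''$ together with partitions $\mathbf F\in\mathcal P(U',D_B)$.

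The main obstacle—and the heart of the argument—will be showing that nearly every summand contributes nothing to the $\mathbf m^{*}$-coefficient. A summand contributes only if $\sigma(\mathbf F)$ is a scalar multiple of $\mathbf m^{*}$, equivalently $\mathbf a_i(\mathbf F)=\mathbf a_i(\mathbf E)$ for each $i\geq k+1$ (and trivially for $i\leq k$, where both partitions are empty). This is precisely the hypothesis of condition (\ref{unique}) applied to $D_B$, which since $\#D_B=\#U'=\#D'$ forces $\sigma(\mathbf F)=0$ unless $\mathbf F=\mathbf E$ as an ordered tuple of parts. That in turn forces $D_B=D'$ and $D_A=D''$, so the $\mathbf m^{*}$-coefficient collapses to $\pm\kappa_{\mathbf E}\det(U'',D'')$, which is nonzero by construction of $(U'',D'')$ together with condition (\ref{nondegen}). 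Hence the determinant is a nonzero polynomial, and the rank inequality follows.

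For the ``in particular'' statement, observe that when $k=0$ the tuple $(m_1,\ldots,m_k)$ is empty, so $V^n_{D\smallsetminus D'}$ imposes no vanishing conditions and has dimension $\#(D\smallsetminus D')$ tautologically.
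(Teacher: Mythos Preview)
Your proposal is correct and follows essentially the same route as the paper's proof: reduce to a rank inequality, pick a maximal nonsingular $(U'',D'')$ from the ``remaining'' block, expand $\det(U''\cup U',\,D''\cup D')$ via two applications of the GLR, and use exceptionality to show the monomial corresponding to $\sigma(\mathbf E)$ survives uniquely. In fact your index bookkeeping ($U''\subseteq U_1\cup\cdots\cup U_k$, $U'\subseteq U_{k+1}\cup\cdots\cup U_r$) is cleaner than the paper's printed version, which contains a few evident $k\leftrightarrow r{-}k$ typos in the proof.
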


We list some special cases in the following corollary.

\begin{corollary}\label{consequences}~
\begin{enumerate}
\item A triple which admits a fully exceptional partition is non-special.
\item If $(n,D(d),\mathbf m)$ is over- or well-determined and admits a fully exceptional partition, then for general points $p_1,\ldots, p_r$, the linear series in $\ocal_{\proj^n}(d)$ of hyper-surfaces with multiplicity $m_i$ at $p_i$ for all $i$ is empty.
\end{enumerate}
\end{corollary}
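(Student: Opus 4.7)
My plan is to derive both parts directly from Theorem~\ref{maintheorem}, treating the three determinacy cases separately. The starting point is the observation that for any fully exceptional partition (with respect to some $(U',D')$) and any admissible value of $k$, Theorem~\ref{maintheorem} gives
\[
\dim V^n_D(\mathbf m) \;\leq\; \dim V^n_{D\smallsetminus D'}(m_1,\ldots,m_k) \;\leq\; \#(D\smallsetminus D'),
\]
the second inequality being the trivial bound on the dimension of a polynomial space by the size of its monomial support. The remark immediately preceding Definition~\ref{exceptional} then tells us exactly what a maximal square submatrix $(U',D')$ must look like in each determinacy case, which collapses the right-hand side to something easy to compare with $\edim$.

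For part~(1), I would go case by case. In the over-determined and well-determined cases, maximality forces $D'=D$, so the bound collapses to $\dim V^n_D(\mathbf m)\leq 0$; since $\edim(n,D,\mathbf m)=0$ in both situations, this already gives the desired equality. In the under-determined case, maximality forces $U'=U$, hence $\#D'=\#U$, and the bound reads $\dim V^n_D(\mathbf m)\leq \#D-\#U=\edim(n,D,\mathbf m)$. Here the reverse inequality is automatic from Corollary~\ref{rankM} together with $\rk M_D(\mathbf m)\leq\#U$, so we again obtain equality. In all three cases $\dim V^n_D(\mathbf m)=\edim(n,D,\mathbf m)$, i.e.\ the triple is non-special.

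Part~(2) is then essentially an unpacking of notation: specializing part~(1) to $D=D(d)$ in the over- or well-determined case yields $V^n(d,\mathbf m)=V^n_{D(d)}(\mathbf m)=0$. Since $V^n(d,\mathbf m)$ stands by convention for $V^n(d,(m_1p_1,\ldots,m_rp_r))$ at general points $p_i$, this says that no nonzero degree-$d$ section of $\ocal_{\proj^n}(d)$ vanishes to the prescribed orders at general points, so the corresponding linear series is empty. I do not expect any real obstacle; the only item requiring attention is the under-determined case of part~(1), where the upper bound from Theorem~\ref{maintheorem} must be paired with the automatic rank-based lower bound to force equality rather than merely an inequality.
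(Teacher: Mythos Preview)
Your proposal is correct and follows essentially the same approach as the paper: the paper presents this corollary as an immediate specialization of Theorem~\ref{maintheorem}, and your case analysis by determinacy type, together with the observation that maximality of $(U',D')$ pins down $D'$ or $U'$ as in the remark preceding Definition~\ref{exceptional}, is exactly the intended derivation. The only place where you add detail beyond what the paper spells out is the under-determined case, where you correctly pair the upper bound from Theorem~\ref{maintheorem} with the automatic lower bound $\dim V^n_D(\mathbf m)\geq\#D-\#U$ coming from Corollary~\ref{rankM}.
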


We will use the abbreviated notation 
$$m^{\times r}=(\;\underbrace{m,\ldots,m}_{\text{$k$ times}}\;)$$

\begin{example}\label{exceptionalnotreduction}
Here we apply Corollary \ref{consequences}.2 to show that no degree $7$ curve in $\proj^2$ has multiplicity $3$ at each of 6 general points. That is, we show $V^2(7,3^{\times 6})=0$. We claim that Figure \ref{notredpic} illustrates a fully exceptional partition, call it $\mathbf E$, of $D(7)$.
\begin{figure}
\centering
\includegraphics{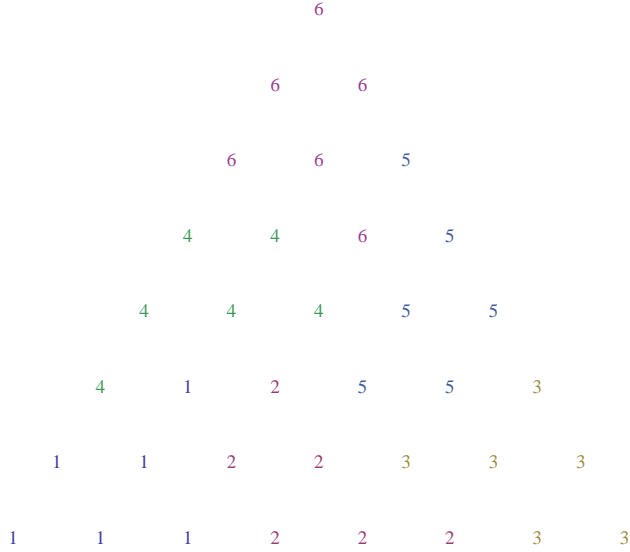}
\caption{An exceptional partition for $(2,D(7),3^{\times 6})$. $E_i$ consists of the points marked with $i$, for $i=1,2,3,4,5,6$.}\label{notredpic}
\end{figure}
That $\sigma(\mathbf E)\neq0$ follows from Corollary \ref{af} below. That no other partition $\mathbf F$ has $\mathbf a_i(\mathbf E)=\mathbf a_i(\mathbf F)$ for $1\leq i\leq6$ can be checked exhaustively, or eyeballed by observing that no other partition $\mathbf F$ has the same sextuple of centroids of its parts as $\mathbf E$. 

(Begin by noticing that there are only $3$ possible sets of $6$ points with the same centroid as $E_1$. For each of these, there are $6$ or $7$ possible sets of $6$ points with disjoint from the first with the same centroid as $E_2$. Among the $20$ cases you end up with, only $3$ allow for a set of $6$ points disjoint from the first two sets with the same centroid as $E_3$. Finally, among these $3$ possibilities, only one admits a set of $6$ point disjoint from the other sets with the same centroid as $E_4$, and that is the case that is shown. By symmetry, we have shown uniqueness.)
\end{example}

\begin{proof}[Proof of Theorem \ref{maintheorem}]
We start by noting that it suffices to prove
\begin{equation}\label{rankstatement}
\rk M_D(m_1,\ldots,m_r)\geq\rk M_{D\smallsetminus D'}(m_{k+1},\ldots,m_r)+\#D'
\end{equation}

Let $(U'',D'')$ be a maximal nonsingular submatrix of 
$$M_{D\smallsetminus D'}(m_{k+1},\ldots,m_r)=(U_{k+1}\cup\cdots\cup U_r,D\smallsetminus D').$$
Then let $\mathcal P$ be the set of partitions $(C',C'')$ of $D'\cup D''$ with $\#C'=\#U'$ and $\#C''=\#U''$. Then by the GLR we have
$$\det (U'\cup U'',D'\cup D'')=\sum_{(C',C'')\in\mathcal P}\pm\det(U',C')\det(U'',C'').$$
Again applying the GLR, we have
$$\det(U',C')=\sum_{\mathbf F\in\mathcal P(U',C')}\pm\left(\prod_{i=1}^r\det(U_i',F_i)\right).$$
Combining these, we get
\begin{equation}\label{expansion}
\det(U'\cup U'',D'\cup D'')=\sum_{\substack{(C',C'')\in\mathcal P,\\
\mathbf F\in\mathcal P(U',C')}}\pm\left(\det(U'',C'')\prod_{i=1}^k\det(U_i',F_i)\right).
\end{equation}

We claim that the only summand of (\ref{expansion}) containing nonzero terms divisible by $\sigma(\mathbf E)$ is the one corresponding to $(D',D'')\in\mathcal P,\;\mathbf E\in\mathcal P(U',D')$. Furthermore we note that this summand is nonzero by the assumption that $(D'',U'')$ is nonsingular and that $\sigma(\mathbf E)\neq0$. If the claim is true, these terms cannot cancel with terms from other summands, and so the determinant in (\ref{expansion}) is nonzero as a polynomial. That is, $\rk M_D(\mathbf m)\geq\#(U'\cup U'')$, proving (\ref{rankstatement}).

To prove the claim, first notice that $\det(U'',C'')$ is a polynomial in $\{P_{i,j}|i>k\}$ and $\det(U',C')$ is a polynomial in $\{P_{i,j}|i\leq k\}$. Hence a nonzero summand of (\ref{expansion}) contains terms divisible by $\sigma(\mathbf E)$ if and only if $\prod_{i=1}^k\det(U_i',F_i)$ does. And, by the exceptionality of $\mathbf E$, this product is a nonzero multiple of $\sigma(\mathbf E)$ if and only if $C'=D'$ and $\mathbf F=\mathbf E$.
\end{proof}

\section{Generalized Reduction Algorithms}\label{generalizedreductionalgorithms}

In order to obtain sharper results, we can make a slight generalization to Theorem \ref{maintheorem}.

\begin{corollary}\label{gaug}
Suppose $D\subseteq G\subseteq D(d)$, and $(n,G,\mathbf m)$ admits an exceptional partition with respect to some $(U',D')$ with $D'\supseteq G\smallsetminus D$ and $U'\subseteq U_{k+1}\cup\cdots\cup U_r$, $k\leq r$. Then
$$\dim V^n_D(m_1,\ldots,m_r)\leq\dim V^n_{D\smallsetminus D'}(m_1,\ldots,m_k).$$
In particular, if $k=0$, then $\dim V^n_D(\mathbf m)\leq\#(D\smallsetminus D')$.
\end{corollary}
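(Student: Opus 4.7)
The plan is to deduce Corollary \ref{gaug} from Theorem \ref{maintheorem} applied not to the triple $(n, D, \mathbf m)$ but to the larger triple $(n, G, \mathbf m)$. The exceptional partition hypothesis is already stated verbatim against $(n, G, \mathbf m)$, so Theorem \ref{maintheorem} applies directly and gives
$$\dim V^n_G(m_1, \ldots, m_r) \leq \dim V^n_{G \smallsetminus D'}(m_1, \ldots, m_k).$$

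Two elementary observations then close the gap. First, the inclusion $D \subseteq G$ produces a containment of subspaces $V^n_D(\mathbf m) \subseteq V^n_G(\mathbf m)$, so $\dim V^n_D(\mathbf m) \leq \dim V^n_G(\mathbf m)$. Second, the hypothesis $D' \supseteq G \smallsetminus D$ forces the set identity $G \smallsetminus D' = D \smallsetminus D'$: the $\supseteq$ direction is immediate from $D \subseteq G$, and for $\subseteq$ any element of $G$ outside $D'$ cannot belong to $G \smallsetminus D$ because $G \smallsetminus D \subseteq D'$, so it must lie in $D$. Chaining these yields
$$\dim V^n_D(\mathbf m) \leq \dim V^n_G(\mathbf m) \leq \dim V^n_{G \smallsetminus D'}(m_1, \ldots, m_k) = \dim V^n_{D \smallsetminus D'}(m_1, \ldots, m_k),$$
which is the desired bound. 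The final sentence ($k = 0$) is immediate, since with no multiplicity conditions imposed the space $V^n_{D \smallsetminus D'}$ simply has dimension $\#(D \smallsetminus D')$.

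There is no real obstacle here. The corollary is a bookkeeping enhancement of Theorem \ref{maintheorem} whose content reduces to the set-theoretic identity $G \smallsetminus D' = D \smallsetminus D'$ that the condition $G \smallsetminus D \subseteq D'$ is designed to enforce. The practical value of the generalization is that it lets one pad a set of interest $D$ out to a larger set $G$ that happens to admit an exceptional partition, without requiring the partition to leave $G \smallsetminus D$ untouched.
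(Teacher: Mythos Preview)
Your proof is correct and matches the paper's own argument essentially line for line: apply Theorem \ref{maintheorem} to $(n,G,\mathbf m)$, use $D\subseteq G$ to bound $\dim V^n_D(\mathbf m)$ by $\dim V^n_G(\mathbf m)$, and invoke the identity $G\smallsetminus D'=D\smallsetminus D'$. Your write-up actually spells out the set-theoretic identity more carefully than the paper does.
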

\begin{proof}
Since $G\supseteq D$, we certainly have 
$$\dim V^n_D(m_1,\ldots,m_r)\leq\dim V^n_G(m_1,\ldots,m_r).$$
Then applying Theorem \ref{maintheorem}, and noting that $G\smallsetminus D'=D\smallsetminus D'$ by assumption, we get that
$$\dim V^n_G(m_1,\ldots,m_r)\leq\dim V^n_{D\smallsetminus D'}(m_1,\ldots,m_k).$$
\end{proof}

It is a slightly annoying point that we allow for the possibility that $G$ properly contains $D$. It is not even obvious that this allowance provides any additional information because we are essentially adding points to $D$ only to throw them away again. However, Example \ref{gnecessaryex} shows that the generalization is nontrivial.

\begin{example}\label{gnecessaryex}
In Figure \ref{gnecessaryfig}, we illustrate a reduction $D'$ of $G$ containing $G\smallsetminus D$. However, $D\cap D'$, is not a reduction of $D$. In particular, $D''$, which is also illustrated, has $\#D'=\#D''$, the same centroid as $D'$, and $(U'',D'')$ nonsingular for suitably chosen $U''$. These facts can be proved using Corollary \ref{nonspeccond} below.
\begin{figure}
\centering
\includegraphics{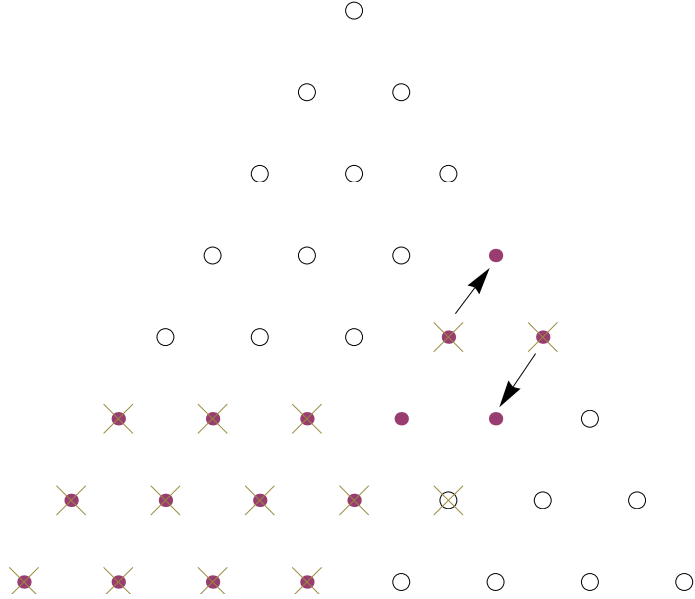}
\caption{A reduction $D'$ of $G$ whose intersection with $D$ is not a reduction of $D$. $D$ is labeled by solid points, $D'$ is labeled by $\times$'s, $G=D\cup D'$, and $D''$ is obtained by replacing the elements of $D'$ at the tails of the two arrows with the elements at the heads.}\label{gnecessaryfig}
\end{figure}
\end{example}

Pushing this generalization further, we may want to use Corollary \ref{gaug} recursively, which leads us to the following definition.

\begin{definition}\label{gradef}
A \emph{generalized reduction algorithm} for a triple $(n,D,\mathbf m)$ is a sequence of integers $0=r_0<r_1<\cdots<r_s=r$ and nested subsets $D=D_s\supseteq\cdots\supseteq D_1\supseteq D_0$ so that
for $1\leq i\leq s$, there exist $G_i\supseteq D_i$ so that $\left(n,G_i,\left(m_1,\ldots,m_{r_i}\right)\right)$ admits an exceptional partition with respect to $\left(U^{(i)},G_i\smallsetminus D_{i-1}\right)$ for some $U^{(i)}\subseteq U_{r_{i-1}+1}\cup\cdots\cup U_{r_i}$.

In the case where $s=r$ (so that each exceptional partition is a reduction), we simply call this a \emph{reduction algorithm}.

If $\#D_0=\edim(n,D,\mathbf m)$, then we call the (generalized) reduction algorithm \emph{full}.
\end{definition}

\begin{theorem}\label{genredalgtheorem}
If $(n,D,\mathbf m)$ admits a generalized reduction algorithm, then $\dim V^n_D(\mathbf m)\leq\#D_0$, where $D_0$ is as in Definition \ref{gradef}. In particular, a triple which admits a full generalized reduction algorithm is non-special.
\end{theorem}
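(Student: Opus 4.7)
The natural approach is induction on $s$, the number of stages in the generalized reduction algorithm, using Corollary \ref{gaug} to peel off one stage at a time.

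For the base case $s=0$, the condition $r_s = r$ forces $r = 0$, so $\mathbf{m}$ is empty and $V^n_D(\mathbf m)$ is simply the space of polynomials supported on $D = D_0$; the bound $\dim V^n_D(\mathbf m) = \#D_0$ is then immediate.

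For the inductive step, I would focus on the final stage. We have $G_s \supseteq D_s = D$ and an exceptional partition of $(n, G_s, (m_1,\ldots,m_{r_s}))$ with respect to $(U^{(s)}, G_s \smallsetminus D_{s-1})$, where $U^{(s)} \subseteq U_{r_{s-1}+1} \cup \cdots \cup U_{r_s}$. I would verify the hypotheses of Corollary \ref{gaug} with the correspondence $G \leftrightarrow G_s$, $D \leftrightarrow D_s$, $D' \leftrightarrow G_s \smallsetminus D_{s-1}$, and $k \leftrightarrow r_{s-1}$: the containment $D' \supseteq G \smallsetminus D$ becomes $G_s \smallsetminus D_{s-1} \supseteq G_s \smallsetminus D_s$, which follows from $D_{s-1} \subseteq D_s$. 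Applying Corollary \ref{gaug} gives
$$\dim V^n_D(m_1,\ldots,m_r) \leq \dim V^n_{D_s \smallsetminus (G_s \smallsetminus D_{s-1})}(m_1,\ldots,m_{r_{s-1}}).$$
The key simplification is then $D_s \smallsetminus (G_s \smallsetminus D_{s-1}) = D_{s-1}$: since $D_s \subseteq G_s$, removing $G_s \smallsetminus D_{s-1}$ from $D_s$ is the same as removing $D_s \smallsetminus D_{s-1}$, leaving $D_{s-1}$. The truncated data $D_{s-1} \supseteq \cdots \supseteq D_0$ with integers $0 = r_0 < \cdots < r_{s-1}$ and the sets $G_1,\ldots,G_{s-1}$ constitute a generalized reduction algorithm for $(n, D_{s-1}, (m_1,\ldots,m_{r_{s-1}}))$ with the same terminal set $D_0$, so the inductive hypothesis yields $\dim V^n_{D_{s-1}}(m_1,\ldots,m_{r_{s-1}}) \leq \#D_0$, completing the induction.

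For the last sentence about fullness: if $\#D_0 = \edim(n,D,\mathbf m)$, we combine the bound just proven with the generic inequality $\dim V^n_D(\mathbf m) = \#D - \rk M_D(\mathbf m) \geq \#D - \#U$ from Corollary \ref{rankM}, together with the trivial $\dim V^n_D(\mathbf m) \geq 0$, to conclude $\dim V^n_D(\mathbf m) = \edim(n,D,\mathbf m)$. The main obstacle is really just the bookkeeping above—in particular the set-theoretic identity $D_s \smallsetminus (G_s \smallsetminus D_{s-1}) = D_{s-1}$ and the verification that the inductive hypothesis applies to the truncated data with the same $D_0$; once these are pinned down, the argument is a direct iteration of Corollary \ref{gaug}.
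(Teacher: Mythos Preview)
Your proof is correct and follows essentially the same approach as the paper: both arguments iterate Corollary \ref{gaug} down the chain $D_s \supseteq \cdots \supseteq D_0$, obtaining $\dim V^n_{D_i}(m_1,\ldots,m_{r_i}) \leq \dim V^n_{D_{i-1}}(m_1,\ldots,m_{r_{i-1}})$ at each step. The paper simply writes this as a chain of inequalities terminating in $\dim V^n_{D_0}=\#D_0$, while you phrase the same iteration as induction on $s$ and spell out the bookkeeping (the identity $D_s\smallsetminus(G_s\smallsetminus D_{s-1})=D_{s-1}$ and the verification of the hypotheses of Corollary \ref{gaug}), but the content is identical.
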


\begin{remark}\label{comparedumredalg}
This is a generalization of Dumnicki and Jarnicki's notion of ``reduction algorithm'' from \cite{MR2325918}. Our primary innovation here is the case where $s<r$. That is, instead of reducing one point at a time, we may reduce by several points at once. Generalized reduction algorithms also generalize applications of Dumnicki's ``diagram-cutting'' method from \cite{MR2289179} for showing non-speciality.

The fully exceptional partition given in Example \ref{exceptionalnotreduction} above is a demonstration of why this is a nontrivial generalization; it cannot be produced one point at a time by a reduction algorithm. To see this, notice that no single part of the partition has a centroid which cannot arise at the centroid of another non-special collection of six points. That being said, the triple in question, $(2,D(7),3^{\times6})$, does admit a full reduction algorithm as illustrated by Figure \ref{redpic} (see Example \ref{alg1ex} below).

The author is not aware of a triple for which a full generalized reduction algorithm exists but a full reduction algorithm does not. In fact, it is apparently unknown if any non-special triples exist which do not admit full reduction algorithms (see Conjecture 19 in \cite{MR2325918}).
\end{remark}

\begin{proof}[Proof of Theorem \ref{genredalgtheorem}]
By Corollary \ref{gaug}, we know that for $1\leq i\leq s$,

$$\dim V^n_{D_i}\left(m_1,\ldots,m_{r_i}\right)\leq\dim V^n_{D_{i-1}}\left(m_1,\ldots,m_{r_{i-1}}\right).$$
Hence, we get that
$$\dim V^n_{D_{s}}(m_1,\ldots,m_r)\leq\dim V^n_{D_0}=\#D_0,$$
which proves the theorem.
\end{proof}

We note that a generalized reduction algorithm gives rise to a partition of $D\smallsetminus D_0$. As demonstrated by Example \ref{gnecessaryex}, the partition is not necessarily exceptional if $G_i$ properly contains $D_i$ for some $i$. However if $G_i=D_i$ for all $i$, the resulting partition is necessarily exceptional. 

This fact implies that the generalization afforded by Theorem \ref{genredalgtheorem} is only useful for proving non-speciality of over-determined triples. For an under- or well-determined triple, $\#D_0=\edim(n,D,\mathbf m)$ implies $G_i=D_i$ for all $i$.

Now that we have established Theorem \ref{genredalgtheorem}, we can focus on techniques for producing exceptional partitions and reductions. Section \ref{constructions} describes some criteria for $\sigma(\mathbf E)\neq0$ and constructions for reduction algorithms, and Section \ref{results} will use these constructions---as well as some \emph{ad hoc} methods---to build full generalized reduction algorithms for some interesting examples.

\section{Constructions}\label{constructions}

Let $\preceq$ be any monomial ordering on $\mathbb{K}[X_0,\ldots,X_n]$. Notice $\preceq$ induces an ordering on $\N^{n+1}$, which we will also call $\preceq$, via
$$\mathbf a\preceq \mathbf b\Longleftrightarrow\mathbf{X^a}\preceq\mathbf{X^b}.$$

For any $D\subseteq D(d)$, $d\geq 2$, and $c\geq1$, define 
$$\mathcal E(D,c)=\{E\subseteq D|\#E=c\}.$$

\begin{definition}
Suppose $E,F\in\mathcal E(D,c)$ with $E=\{\mathbf a_1,\ldots,\mathbf a_c\}$ and $F=\{\mathbf b_1,\ldots,\mathbf b_c\}$ with $\mathbf a_i\prec\mathbf a_{i+1}$ and $\mathbf b_i\prec \mathbf b_{i+1}$ for $1\leq i<c$. Then we define the \emph{$\preceq$-lexicographic ordering} on $\mathcal E(D,c)$, for which we will abuse notation and also call $\preceq$, by
$$E\prec F\Longleftrightarrow\text{ there exists $k\geq1$ so that $\mathbf a_k\prec\mathbf b_k$, and $\mathbf a_i=\mathbf b_i$ for all $i<k$.}$$
\end{definition}

\begin{lemma}
The $\preceq$-lexicographic ordering on $\mathcal E(D,c)$ is a well-ordering for any monomial ordering $\preceq$.
\end{lemma}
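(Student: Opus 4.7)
The plan is to reduce the claim to the standard fact that the lexicographic product of finitely many well-orderings is itself a well-ordering. First, I recall that by definition a monomial ordering $\preceq$ on $\mathbb K[X_0,\ldots,X_n]$ is a well-ordering on monomials, so the induced $\preceq$ on $\N^{n+1}$ is a well-ordering. This allows me to list the elements of each $E\in\mathcal E(D,c)$ unambiguously in $\prec$-increasing order $\mathbf a_1\prec\cdots\prec\mathbf a_c$ (as is already used in the definition of the $\preceq$-lex ordering), and to identify $\mathcal E(D,c)$ with the set of strictly $\prec$-increasing $c$-tuples inside $(\N^{n+1})^c$ via $E\mapsto(\mathbf a_1,\ldots,\mathbf a_c)$. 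Under this identification the $\preceq$-lex ordering on $\mathcal E(D,c)$ agrees with the usual lexicographic ordering on $(\N^{n+1})^c$ coming from $\preceq$ on each factor.

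Second, I would check that this ordering on $\mathcal E(D,c)$ is total — given $E\neq F$, the two sorted tuples first disagree at some position $k$, and the comparison $\mathbf a_k\prec\mathbf b_k$ or $\mathbf a_k\succ\mathbf b_k$ determines which is larger — and then show it has no infinite strictly descending chain. Suppose for contradiction that $E^{(1)}\succ E^{(2)}\succ\cdots$ is such a chain with $E^{(j)}=(\mathbf a_1^{(j)},\ldots,\mathbf a_c^{(j)})$. The sequence $\mathbf a_1^{(1)}\succeq \mathbf a_1^{(2)}\succeq\cdots$ in $\N^{n+1}$ is non-increasing, so by the well-ordering of $\preceq$ on $\N^{n+1}$ it must stabilize past some index $j_1$. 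On the tail $j\geq j_1$, the strict descent must occur among the remaining $c-1$ coordinates, and the same stabilization argument applied to $\mathbf a_2^{(j)}$ produces a further tail $j\geq j_2$ on which the second coordinate is also constant. Iterating $c$ times yields a tail on which every coordinate is constant, so the $E^{(j)}$ are eventually constant, contradicting strict descent.

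The argument is entirely routine; the only substantive input is that $\preceq$ well-orders $\N^{n+1}$, which is part of the definition of a monomial ordering, and the rest is a standard iterated stabilization (or equivalently, a straightforward induction on $c$). I do not expect any genuine obstacle here. The one subtlety worth flagging is that the identification $E\mapsto(\mathbf a_1,\ldots,\mathbf a_c)$ is well-defined only because $\prec$ is a total order on $\N^{n+1}$; without this, there would be no canonical sorted tuple to compare lexicographically.
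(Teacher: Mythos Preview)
Your proof is correct. You embed $\mathcal E(D,c)$ into the $c$-fold product $(\N^{n+1})^c$ via sorted tuples and then run the standard iterated-stabilization argument to rule out infinite descending chains. The paper instead proves well-ordering directly in its ``every nonempty subset has a least element'' form: given any nonempty $\mathcal F\subseteq\mathcal E(D,c)$, it recursively picks the $\preceq$-minimal point $\mathbf a_1$ occurring in any member of $\mathcal F$, restricts to the subfamily containing $\mathbf a_1$, then repeats to find $\mathbf a_2,\ldots,\mathbf a_c$, ending with the singleton $\{\mathbf a_1,\ldots,\mathbf a_c\}$. The two arguments are equally routine, but the paper's version is mildly more constructive: it actually exhibits the minimal element of an arbitrary subfamily, which is exactly how the lemma is used downstream (e.g.\ in defining the $\preceq$-minimal element of $\mathcal F(D,c,m)$). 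Your descending-chain argument relies implicitly on dependent choice to pass from ``no infinite descending chain'' to ``well-ordering,'' though this is of course standard practice and in any case moot here since $D\subseteq D(d)$ is finite. One small quibble: that a monomial ordering is a well-ordering is not literally part of the usual definition but a (standard) consequence via Dickson's lemma; it would be cleaner to cite this rather than call it definitional.
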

\begin{proof}
This is probably standard, and the proof works for the lexicographic ordering of finite subsets of any well-ordered set. In any event, if $\mathcal F\subseteq \mathcal E(D,c)$, then take the minimal element $\mathbf a_1$ appearing in any $E\in \mathcal F$, and let $\mathcal F_1\subseteq \mathcal F$ be the collection of all $E\in\mathcal F$ containing $\mathbf a_1$. Then recursively take the minimal element $\mathbf a_{i+1}$ different from $\mathbf a_1,\ldots,\mathbf a_i$ appearing in any $E\in\mathcal F_i$, and let $\mathcal F_{i+1}$ be the collection of $E\in\mathcal F_i$ containing $\mathbf a_{i+1}$. Then $\mathcal F_c$ will contain only the minimal element $E$ of $\mathcal F$.
\end{proof}

Notice, $E\preceq F$ does not imply $\sum_{\mathbf a\in E}\mathbf a\preceq\sum_{\mathbf b\in F}\mathbf b$. For example, using the standard lexicographic ordering on $\mathbb K[X_0,X_1,X_2]$, we have
\begin{equation}\label{smallersum}
\{(2,0,0),(0,0,2)\}\prec\{(1,1,0),(1,0,1)\},\quad (2,0,2)\succ(2,1,1).
\end{equation}

Define $\tilde D(d)$ to be the hyperplane in $\Q^{n+1}$ that contains $D(d)$. 

\begin{lemma}[Dumnicki]\label{dumnonspeccond}
A well-defined triple $(n,E,(m))$ is special if and only if $E$ is contained in a degree-$(m{-}1)$ hypersurface in $\tilde D(d)$ (i.e. iff there exists a nonzero homogeneous polynomial of degree $m-1$ in $\Q[A_0,\ldots,A_n]$ which vanishes at every $\mathbf a\in E$).
\end{lemma}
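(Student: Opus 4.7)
My plan is to factor $\det M_E((m))$ explicitly and then reduce the resulting vanishing condition to a classical polynomial interpolation problem on an affine model of $\tilde D(d)$. Since $(n,E,(m))$ is well-defined, $M_E((m))$ is a square matrix, so the triple is special precisely when $\det M_E((m)) = 0$ as a polynomial in $\mathbf{P}_1$.

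First I would apply the Leibniz formula:
$$\det M_E((m)) = \sum_{\pi\colon D(m-1)\to E}\sgn(\pi)\prod_{\mathbf b\in D(m-1)}\kappa_{\pi(\mathbf b),\mathbf b}\cdot\mathbf P_1^{\sum_{\mathbf b}(\pi(\mathbf b)-\mathbf b)},$$
where $\kappa_{\mathbf a,\mathbf b}$ is the scalar coefficient in (\ref{defineM}). The critical observation is that the exponent $\sum_{\mathbf b}\pi(\mathbf b) = \sum_{\mathbf a\in E}\mathbf a$ is the same for every bijection $\pi$, so every nonzero summand is a scalar multiple of a single monomial $\mathbf P_1^{\mathbf c}$, where $\mathbf c = \sum_{\mathbf a\in E}\mathbf a - \sum_{\mathbf b\in D(m-1)}\mathbf b$. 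Consequently $\det M_E((m)) = \mathbf P_1^{\mathbf c}\cdot\det K$, where $K = (\kappa_{\mathbf a,\mathbf b})$ is a scalar matrix, and speciality reduces to $\det K = 0$.

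Next I would interpret $K$ as an evaluation matrix. The entry $\kappa_{\mathbf a,\mathbf b} = \prod_{j=1}^n a_j^{(b_j)}$ (a product of falling factorials, $x^{(k)} = x(x-1)\cdots(x-k+1)$) is the value at $(a_1,\ldots,a_n)$ of the polynomial $A_1^{(b_1)}\cdots A_n^{(b_n)}\in\Q[A_1,\ldots,A_n]$. As $\mathbf b$ ranges over $D(m-1)$, the tuples $(b_1,\ldots,b_n)$ range bijectively over $\{\mathbf b'\in\N^n:|\mathbf b'|\le m-1\}$, and by the standard unitriangular change of basis via Stirling numbers of the first kind, these falling-factorial polynomials form a basis of $\Q[A_1,\ldots,A_n]_{\le m-1}$. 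Since $\mathbf a\mapsto(a_1,\ldots,a_n)$ is a bijection from $E$ onto its image $E'\subseteq\N^n$ (because $a_0 = d-\sum_{j\ge 1}a_j$ on $D(d)$), we conclude $\det K\ne 0$ iff the evaluation map $\Q[A_1,\ldots,A_n]_{\le m-1}\to\Q^{E'}$, $p\mapsto(p(a_1,\ldots,a_n))_{\mathbf a\in E}$, is an isomorphism.

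Finally, the substitution $A_0\mapsto d - A_1 - \cdots - A_n$ induces a linear isomorphism $\Q[A_0,\ldots,A_n]_{m-1}^{\mathrm{hom}}\xrightarrow{\sim}\Q[A_1,\ldots,A_n]_{\le m-1}$: both sides have dimension $\binom{m-1+n}{n}$, and injectivity follows because no nonzero homogeneous polynomial of degree $m-1$ can be divisible by the inhomogeneous polynomial $A_0+A_1+\cdots+A_n-d$ (straightforward by comparing homogeneous components of any putative quotient and inducting downward). Because $|\mathbf a| = d$ on $E$, this isomorphism preserves evaluation under the identification $E\leftrightarrow E'$, so the existence of a nonzero homogeneous $Q$ of degree $m-1$ vanishing on $E$ is equivalent to $\det K = 0$, and hence to the speciality of $(n,E,(m))$. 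The main obstacle is the Leibniz observation that every nonzero summand shares the monomial $\mathbf P_1^{\mathbf c}$; once this common factor is extracted, the remainder of the proof is a routine dictionary between three equivalent descriptions of the same interpolation problem (falling factorials on $E'$, monomials on $E'$, and homogeneous monomials of degree $m-1$ on $E$).
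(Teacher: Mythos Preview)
Your argument is correct, and in fact the paper does not give its own proof of this lemma at all: it simply cites Lemma~8 of Dumnicki's paper \cite{MR2342565}. The route you take---factor the common monomial $\mathbf P_1^{\mathbf c}$ out of $\det M_E((m))$ via the Leibniz formula, recognize the remaining scalar determinant as the evaluation matrix of a basis of $\Q[A_1,\ldots,A_n]_{\le m-1}$ at the points of $E$, and then identify that space with homogeneous degree-$(m{-}1)$ forms on $\tilde D(d)$ via the substitution $A_0\mapsto d-\sum_{j\ge 1}A_j$---is exactly the natural computation, and is essentially what underlies the cited reference.

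One caveat worth flagging: you read off $\kappa_{\mathbf a,\mathbf b}=\prod_{j=1}^n a_j^{(b_j)}$ from the displayed formula~(\ref{defineM}), but that display is missing the $j=0$ factor; the actual coefficient of $\partial^{\mathbf b}\mathbf X^{\mathbf a}$ is $\prod_{j=0}^{n}a_j^{(b_j)}$. This does not damage your proof. With the corrected $\kappa$ the factorization $\det M_E((m))=\mathbf P_1^{\mathbf c}\det K$ still holds verbatim, and the resulting family $\{A_0^{(b_0)}\cdots A_n^{(b_n)}:\mathbf b\in D(m-1)\}$, restricted to the hyperplane $\sum_i A_i=d$, still has span equal to $\Q[A_1,\ldots,A_n]_{\le m-1}$: on that hyperplane any polynomial of degree $<m-1$ agrees with $d^{-k}(\sum_i A_i)^k$ times itself for suitable $k$, so the image of the degree-$(m-1)$ homogeneous forms already fills $\Q[A_1,\ldots,A_n]_{\le m-1}$, and your family has the monomials $\prod_i A_i^{b_i}$ as leading terms. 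Hence the remainder of your interpolation argument goes through unchanged.
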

\begin{proof}
See Lemma 8 in \cite{MR2342565}.
\end{proof}

Suppose $E\subseteq D(d)$ is contained in a subspace $S$ of $\Q^{n+1}$. Consider $\proj S$ as the projective space of lines in $S$ through the origin, and define $W^S(m-1,E)$ to be the subspace of $H^0(\ocal_{\proj S}(m-1))$ of sections vanishing at all of the points over $E$. When $S$ is all of $\Q^{n+1}$ (the case we will consider most often), we simply write $W^n(m-1,E)$.

Notice that we can identify $W^n(m-1,E)$ with the subspace of homogeneous degree-$(m{-}1)$ polynomials in $\Q[A_0,\ldots,A_n]$ which vanish at every point of $E$. Hence we can rephrase Lemma \ref{dumnonspeccond} as saying $E$ with $\#E={m+n-1\choose n}$ is special if and only if $W^n(m-1,E)=0$. In fact, a closer inspection of the proof we cited in \cite{MR2342565} gives us the following.

\begin{corollary}\label{nonspeccond}
An over- or well-determined triple $(n,E,(m))$ is non-special if and only if
$$\dim W^n(m-1,E)={m+n-1\choose n}-\#E.$$
In particular, if a well- or under-defined triple $(n,F,(m))$ is non-special and $E\subseteq F$, then $(n,E,(m))$ is non-special. 
\end{corollary}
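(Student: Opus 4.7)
The plan is to identify the rank of the polynomial matrix $M_E(m)$ with the rank of the scalar evaluation matrix $\mathrm{ev}_E := [\mathbf a^{\mathbf b}]_{\mathbf a\in E,\,\mathbf b\in D(m-1)}$, whose kernel (as a map $V^n(m-1)\to\mathbb K^E$) is exactly $W^n(m-1,E)$. Once that identification is in place, both parts of the corollary follow from rank-nullity and Lemma~\ref{dumnonspeccond}.

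First I would establish a determinantal factorization: for any square submatrix $(U',E')$ of $M(m)$, formula~(\ref{defineM}) shows that every permutation product $\prod_{\mathbf b\in U'} M(m)_{[\sigma(\mathbf b),\mathbf b]}$ carries exactly the same monomial $\mathbf P^{\sum_{\mathbf a\in E'}\mathbf a\,-\,\sum_{\mathbf b\in U'}\mathbf b}$ in $\mathbf P$, independently of the bijection $\sigma$. Factoring out this common monomial yields $\det M_{U',E'}(m) = \mathbf P^{\sum_{E'}\mathbf a - \sum_{U'}\mathbf b}\cdot\det C_{U',E'}$, where $C = [(\mathbf a)_{\mathbf b}]$ has scalar entries $(\mathbf a)_{\mathbf b} = \prod_j a_j(a_j-1)\cdots(a_j-b_j+1)$. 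In particular every minor of $M_E(m)$ vanishes as a polynomial in $\mathbf P$ iff the corresponding minor of $C_E$ vanishes, so $\mathrm{rank}\,M_E(m) = \mathrm{rank}\,C_E$ over $\mathbb K(\mathbf P)$.

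Next I would show $\mathrm{rank}\,C_E = \mathrm{rank}\,\mathrm{ev}_E$. Since $(\mathbf a)_{\mathbf b}$ and $\mathbf a^{\mathbf b}$ differ only in monomials in $\mathbf a$ of strictly lower total degree, and since on $\tilde D(d)$ the relation $a_0+\cdots+a_n = d$ lets one replace any such lower-degree monomial $\mathbf a^{\mathbf k}$ by $\mathbf a^{\mathbf k}\bigl((a_0+\cdots+a_n)/d\bigr)^{m-1-|\mathbf k|}$, a combination of degree-$(m-1)$ monomials, the two row-systems $\{(\mathbf a)_{\mathbf b}\}_{\mathbf b\in D(m-1)}$ and $\{\mathbf a^{\mathbf b}\}_{\mathbf b\in D(m-1)}$ span the same space of functions on $\tilde D(d)$, related by an invertible triangular change of basis. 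I expect this step to be the main obstacle: the subtlety is that the change of basis is genuinely non-triangular on all of $\mathbb Q^{n+1}$ and only becomes triangular once one uses the hyperplane equation $|\mathbf a|=d$, so one must track carefully that the lifted basis preserves the ambient degree $m-1$.

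Assembling the pieces, for over- or well-determined $(n,E,(m))$ (where $\#E \le \binom{m+n-1}{n}$) non-speciality reads $\mathrm{rank}\,M_E(m) = \#E$, which by the chain above equals $\mathrm{rank}\,\mathrm{ev}_E = \binom{m+n-1}{n} - \dim W^n(m-1,E)$; setting these equal gives the stated equivalence. For the ``in particular'' clause, applying the same rank identification to $F$ shows that non-speciality of $(n,F,(m))$ forces $\mathrm{rank}\,\mathrm{ev}_F = \binom{m+n-1}{n}$ and hence $W^n(m-1,F)=0$; one then extracts from $F$ a well-determined subset $F'$ of size $\binom{m+n-1}{n}$ with $\det M_{F'}(m)\neq 0$, applies Lemma~\ref{dumnonspeccond} to conclude $W^n(m-1,F')=0$, and combines this with the containment $W^n(m-1,F) \subseteq W^n(m-1,E)$ and the main equivalence applied to $E$ to conclude that $(n,E,(m))$ is non-special.
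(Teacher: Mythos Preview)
Your argument for the main equivalence is sound and is in the spirit of Dumnicki's Lemma~8 (which is all the paper cites for this corollary). The determinantal factorization giving $\rk M_E(m)=\rk C_E$ is exactly the content of~(\ref{summandformula}). The one inaccuracy is your claim that the passage from $\{(\mathbf a)_{\mathbf b}:|\mathbf b|=m-1\}$ to $\{\mathbf a^{\mathbf b}:|\mathbf b|=m-1\}$ becomes \emph{triangular} on $\tilde D(d)$: it does not. Expanding $(\mathbf a)_{\mathbf b}$ and then homogenizing each lower-degree piece via $(|\mathbf a|/d)^k$ produces monomials $\mathbf a^{\mathbf b'}$ with $\mathbf b'$ incomparable to $\mathbf b$; for instance with $n=1$, $m-1=2$, the homogenized $a_0(a_0-1)$ on $\tilde D(d)$ is $(1-\tfrac1d)a_0^2-\tfrac1d\,a_0a_1$. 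What you need is merely that the transition matrix is \emph{invertible}, and under the paper's implicit hypothesis $d\ge m-1$ (needed already for the identification $H^0\mathcal J^m(d)\subseteq V^n(d-m+1)\otimes\mathbb K^{D(m-1)}$) this can be checked by evaluating at integer points of $D(d)$: e.g.\ $(\mathbf a)_{\mathbf b}|_{\mathbf a=(d,0,\dots,0)}$ vanishes for every $\mathbf b\in D(m-1)$ except $\mathbf b=(m-1,0,\dots,0)$, and iterating shows the restricted falling factorials are linearly independent on $\tilde D(d)$.

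Your treatment of the ``in particular'' clause has a genuine gap. From $E\subseteq F$ you correctly get $W^n(m-1,F)\subseteq W^n(m-1,E)$, but combining this with $W^n(m-1,F)=0$ yields only $0\subseteq W^n(m-1,E)$, which is vacuous; extracting a well-determined $F'\subseteq F$ does not help unless you can arrange $E\subseteq F'$, and there is no reason the columns indexed by $E$ lie in your chosen basis of the column space of $M_F(m)$. In fact the clause as printed (with $F$ well- or \emph{under}-defined) is false: take $n=2$, $m=2$, $F=D(5)$, and $E$ any three collinear points of $F$; then $(2,F,(2))$ is under-determined and non-special, yet $\dim W^2(1,E)=1$ and $(2,E,(2))$ is special. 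The paper's own applications of this corollary (e.g.\ in the proof of Proposition~\ref{firsts}, ``$E$ is non-special, and by Corollary~\ref{nonspeccond}, its subsets are as well'') all take $F$ well- or \emph{over}-defined; under that corrected hypothesis the clause is immediate, since $M_F(m)$ then has linearly independent columns and $M_E(m)$ consists of a subset of them.
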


Also, the following lemma is elementary, but we will use it frequently.

\begin{lemma}\label{addpoint}
For any $E\subseteq D(d)$ and $\mathbf a\in D(d)$ we have
$$\dim W^n(m-1,E)\geq\dim W^n(m-1,E\cup\{\mathbf a\})\geq\dim W^n(m-1,E)-1.$$
\end{lemma}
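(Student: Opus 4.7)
The plan is to treat the two inequalities separately, as they correspond to two elementary linear-algebra facts about adding a single linear condition to a vector space.

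For the first inequality, I would simply observe that $W^n(m-1, E\cup\{\mathbf a\})\subseteq W^n(m-1,E)$: any homogeneous polynomial of degree $m-1$ in $\Q[A_0,\ldots,A_n]$ that vanishes on every point of $E\cup\{\mathbf a\}$ certainly vanishes on every point of $E$. Containment of subspaces yields the desired inequality of dimensions.

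For the second inequality, using the identification mentioned just before the lemma, I would define the evaluation linear functional
$$\mathrm{ev}_{\mathbf a}:W^n(m-1,E)\longrightarrow\Q,\qquad f\longmapsto f(\mathbf a),$$
where $\mathbf a\in\N^{n+1}\subseteq\Q^{n+1}$ is regarded as an affine point (polynomials in $W^n(m-1,E)$ are honest homogeneous polynomials in $\Q[A_0,\ldots,A_n]$, so this evaluation is well-defined). By definition, the kernel of $\mathrm{ev}_{\mathbf a}$ is exactly $W^n(m-1,E\cup\{\mathbf a\})$, while the image is a subspace of $\Q$ and hence has dimension at most $1$. Applying the rank-nullity theorem gives
$$\dim W^n(m-1,E)\leq\dim W^n(m-1,E\cup\{\mathbf a\})+1,$$
which rearranges to the claimed bound.

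There is no real obstacle here; the only minor point worth stating cleanly in the write-up is the identification of $W^n(m-1,E)$ with a subspace of $\Q[A_0,\ldots,A_n]$ (rather than working with sections of $\ocal_{\proj^n}(m-1)$, where evaluation at a projective point would only be defined up to scalar) so that the evaluation functional is genuinely a map to $\Q$.
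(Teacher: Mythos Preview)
Your proof is correct and is essentially the same as the paper's: the paper simply notes that $W^n(m-1,E\cup\{\mathbf a\})$ is the vanishing locus of a single (possibly zero) linear condition on $W^n(m-1,E)$, which is exactly your evaluation-functional argument spelled out in more detail.
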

\begin{proof}
Notice that $W^n(m-1,E\cup\{\mathbf a\})$ is the vanishing of a single (possibly zero) linear condition on $W^n(m-1,E)$. Hence, adding a point either reduces the dimension by one or leaves it the same.
\end{proof}

\begin{example}
Consider the subset $E$ of $D(7)$ illustrated in Figure \ref{wspecial}. There is a pencil of quadrics passing through the five points of $E$---the line shown plus any line through the remaining point. Hence $W^2(2,E)=2$, but ${3+2-1\choose 2}-5=1$, and so $(2,E,(3))$ is special.
\begin{figure}
\centering
\includegraphics{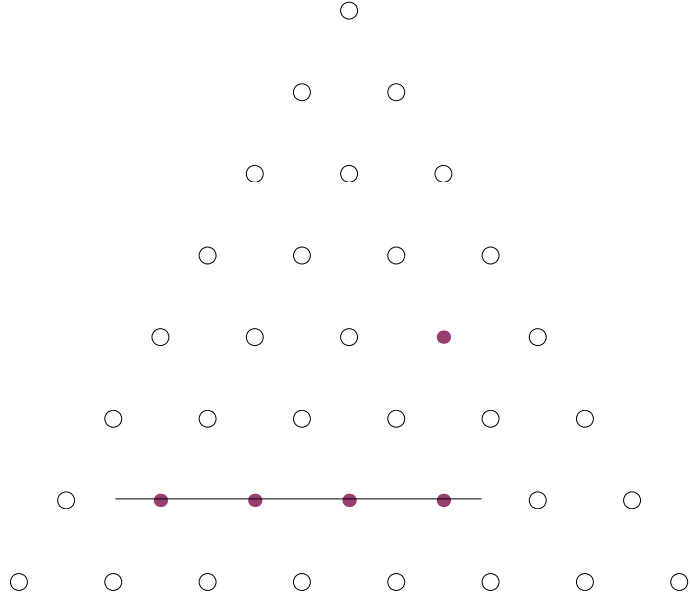}
\caption{A subset $E$ of $D(7)$ for which $(2,E,(3))$ is special. The filled dots represent the points in $E$.}\label{wspecial}
\end{figure}
\end{example}

Define
$$\mathcal F(D,c,m)=\left\{E\in\mathcal E(D,c)\left|\text{$(n,E,(m))$ is non-special}\right.\right\}.$$

The following proposition seems innocuous at first, but in light of examples like (\ref{smallersum}), it should actually be somewhat surprising, and the proof is slightly technical.

\begin{proposition}\label{firsts}
Suppose $c\leq {m+n-1\choose n}$, and let $E$ be the minimal element of $\mathcal F(D,c,m)$ with respect to the $\preceq$-lexicographical ordering for some monomial ordering $\preceq$. Then every $F\in\mathcal F(D,c,m)$ has the property that
$$\sum_{\mathbf a\in E}\mathbf a\preceq\sum_{\mathbf b\in F}\mathbf b$$
with equality holding only if $E=F$.
\end{proposition}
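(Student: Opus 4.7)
The plan is to recast $\mathcal F(D,c,m)$ as the collection of bases of the rank-$c$ truncation of a linear matroid, and then to combine a classical basis-exchange theorem with the multiplicativity of the monomial ordering $\preceq$. Concretely, Corollary \ref{nonspeccond} (together with the hypothesis $c \le \binom{m+n-1}{n}$) says a subset $E' \subseteq D$ of size $c$ is non-special exactly when the evaluation vectors $v_{\mathbf a} := (\mathbf{a}^{\mathbf c})_{\mathbf c \in D(m-1)} \in \Q^{D(m-1)}$, indexed by $\mathbf a \in E'$, are linearly independent. So $\mathcal F(D,c,m)$ is the set of size-$c$ independent sets---equivalently, the set of bases of the rank-$c$ truncation---of the linear matroid $\mathcal M$ on $D$ spanned by these vectors.

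Given $F \in \mathcal F(D,c,m)$ with $F \neq E$ (the case $F = E$ is trivial), I would invoke Brualdi's basis-exchange theorem in the bijective form of Greene--Woodall, applied to $E$ and $F$ as bases of this truncation. This produces a bijection $\phi : E \setminus F \to F \setminus E$ such that $E_{\mathbf x} := (E \setminus \{\mathbf x\}) \cup \{\phi(\mathbf x)\} \in \mathcal F(D,c,m)$ for every $\mathbf x \in E \setminus F$. A direct inspection of the sorted sequences of $E$ and $E_{\mathbf x}$ shows that $E_{\mathbf x} \prec E$ in the $\preceq$-lex order exactly when $\phi(\mathbf x) \prec \mathbf x$; since $E$ is the $\preceq$-lex minimum and $\phi(\mathbf x) \neq \mathbf x$, we are forced to have $\phi(\mathbf x) \succ \mathbf x$ strictly for every $\mathbf x \in E \setminus F$.

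To finish, I would use that $\preceq$ is multiplicative: multiplying the strict inequalities $\mathbf X^{\mathbf x} \prec \mathbf X^{\phi(\mathbf x)}$ over the nonempty index set $E \setminus F$, and then multiplying both sides by $\prod_{\mathbf a \in E \cap F} \mathbf X^{\mathbf a}$, yields $\mathbf X^{\sum_{\mathbf a \in E} \mathbf a} \prec \mathbf X^{\sum_{\mathbf b \in F} \mathbf b}$, which is exactly the desired strict inequality on the sum-exponents. The main obstacle---and the reason this statement is nontrivial---is the phenomenon flagged by example (\ref{smallersum}): $\preceq$-lex comparison of subsets does not by itself control $\preceq$-comparison of their sum vectors, so a naive greedy argument cannot work. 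The Brualdi--Greene--Woodall exchange is the key tool precisely because it supplies a \emph{bijective} pairing in which every $\mathbf x \in E \setminus F$ is traded for a strictly $\preceq$-larger element of $F \setminus E$; this is the exact hypothesis under which multiplicativity of $\preceq$ converts a collection of monomial inequalities into the sum inequality we want.
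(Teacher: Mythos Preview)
Your argument is correct, and it takes a genuinely different route from the paper's. The paper never invokes matroid language: it argues by contradiction, assuming $F\neq E$ has minimal sum, locating the first index $i$ where the sorted sequences differ, and then---by a hands-on chase through dimensions of the spaces $W^n(m-1,\,\cdot\,)$ (Corollary~\ref{nonspeccond} and Lemma~\ref{addpoint})---producing a single index $j\geq i$ for which $(F\smallsetminus\{\mathbf b_j\})\cup\{\mathbf a_i\}$ is again non-special and has strictly smaller sum. In matroid terms the paper is finding one element of the fundamental circuit of $\mathbf a_i$ in $F$ outside $\{\mathbf a_1,\ldots,\mathbf a_i\}$, but it never says so, and the verification occupies most of the proof. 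Your approach instead packages everything into the Brualdi bijective basis-exchange theorem: once $\mathcal F(D,c,m)$ is identified with the bases of the rank-$c$ truncation of the linear matroid on $\{v_{\mathbf a}\}_{\mathbf a\in D}$, the bijection $\phi$ plus lex-minimality of $E$ forces $\phi(\mathbf x)\succ\mathbf x$ for every $\mathbf x\in E\smallsetminus F$, and multiplicativity of $\preceq$ finishes. This is shorter and more conceptual, and makes transparent that the statement holds for the lex-minimal basis of \emph{any} matroid on a $\preceq$-ordered ground set; the cost is that it imports an external result (Brualdi's theorem, whose proof ultimately rests on a Hall-type argument), whereas the paper's proof is entirely self-contained from Corollary~\ref{nonspeccond} and Lemma~\ref{addpoint}. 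Both the matroid identification via Corollary~\ref{nonspeccond} and the single-swap lemma (that $(E\smallsetminus\{\mathbf x\})\cup\{\mathbf y\}\succ E$ in the $\preceq$-lex order iff $\mathbf y\succ\mathbf x$) are correct as you state them.
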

\begin{proof}
Suppose $E=\{\mathbf a_1,\ldots,\mathbf a_c\}$ with $\mathbf a_i\prec\mathbf a_{i+1}$ is the minimal element of $\mathcal F(D,c,m)$, and $F=\{\mathbf b_1,\ldots,\mathbf b_c\}$ with $\mathbf b_i\prec\mathbf b_{i+1}$ has the minimal sum of any element of $\mathcal F(D,c,m)$. By way of contradiction, suppose $E\neq F$.

By Corollary \ref{nonspeccond}, we know
$$\dim W^n(m-1,E)=\dim W^n(m-1,F)=M-c,\quad M={m+n-1\choose n}.$$

Let $i$ be minimal with $\mathbf a_i\neq\mathbf b_i$ so that $\mathbf a_i\prec\mathbf b_i$ (since $E\prec F$). Then $F'=\left(F\smallsetminus\{\mathbf b_i\}\right)\cup\{\mathbf a_i\}$ has a sum strictly smaller than $F$, so $F'\notin\mathcal F(D,c,m)$; that is, $F'$ is special. By Corollary \ref{nonspeccond} and Lemma \ref{addpoint} we have,
$$M-c> \dim W^n(m-1,F')\geq\dim W^n(m-1,F\smallsetminus\{\mathbf b_i\})=M-c+1.$$
Hence $\dim W^n(m-1,F')=M-c+1$

Again using Lemma \ref{addpoint}, we have
$$\dim W^n(m-1,F')-1\leq\dim W^n(m-1,F\cup\{\mathbf a_i\})\leq\dim W^n(m-1,F)$$
and so the middle dimension must be $M-c$. Thus $F\cup\{\mathbf a_i\}$ is special.

Let $F''$ be any minimal (with respect to containment) subset of $F$ with the property that $F''\cup\{a_i\}$ is special. Note that $F''$ cannot contain only $\mathbf b_j$ with $j<i$, else
$$F''\cup\{\mathbf a_i\}\subseteq\{\mathbf a_1,\ldots,\mathbf a_i\}\subseteq E.$$ 
(Remember $E$ is non-special, and by Corollary \ref{nonspeccond}, its subsets are as well). Hence $\left(F''\smallsetminus\{\mathbf b_j\}\right)\cup\{\mathbf a_i\}$ is non-special for some $j\geq i$, which implies $\mathbf b_j\succ\mathbf a_i$.

Consider the following set of properties that some subset $\hat F\subseteq F$ may have:
\begin{equation}\label{hatprops}
\text{$F''\subseteq\hat F\subseteq F$;\quad
$\hat F\cup\{\mathbf a_i\}$ is special;\quad
$(\hat F\smallsetminus\{\mathbf b_j\})\cup\{\mathbf a_i\}$ is non-special.}
\end{equation}
We claim that if $G$ satisfies (\ref{hatprops}), then for any $\mathbf b_k\in F\smallsetminus G$ with $k\neq j$, $G\cup\{\mathbf b_k\}$ satisfies the properties of (\ref{hatprops}) as well. Noticing that $F''$ satisfies (\ref{hatprops}), this will allow us to apply the claim recursively, adding the points of $F\smallsetminus F''$ one at a time, and at the end conclude that $(F\smallsetminus\{\mathbf b_j\})\cup\{\mathbf a_i\}$ is non-special and hence in $\mathcal F(D,c,m)$. But $(F\smallsetminus\{\mathbf b_j\})\cup\{\mathbf a_i\}$ has a sum strictly less than that of $F$, contradicting the minimality assumption and proving the theorem.

To prove the claim, suppose $G$ satisfies (\ref{hatprops}). Start by letting $\mathbf b_k\in F\smallsetminus G$, $k\neq j$. We then have $F''\subseteq G\subseteq F$, and since $F''\subseteq G$ and $F''$ is minimal, we also have $G\cup\{\mathbf a_i\}$ special. So we have only to prove that $\left(G\smallsetminus\{\mathbf b_j\}\right)\cup\{\mathbf a_i,\mathbf b_k\}$ is non-special. If it were special, then we would have, from two applications of Lemma \ref{addpoint},

\begin{eqnarray*}
W^n(m-1,\left(G\smallsetminus\{\mathbf b_j\}\right)\cup\{\mathbf a_i,\mathbf b_k\})&=&W^n(m-1,\left(G\smallsetminus\{\mathbf b_j\}\right)\cup\{\mathbf b_k\})\\
W^n(m-1,\left(G\smallsetminus\{\mathbf b_j\}\right)\cup\{\mathbf a_i,\mathbf b_k\})
&=&W^n(m-1,\left(G\smallsetminus\{\mathbf b_j\}\right)\cup\{\mathbf a_i\})
\end{eqnarray*}

Since the right-hand sides would then equal, we would be able restrict each space to the set of polynomials vanishing at $\mathbf b_j$ and obtain

\begin{equation}\label{ws}
W^n(m-1,G\cup\{\mathbf b_k\})=W^n(m-1,G\cup\{\mathbf a_i\})
\end{equation}

However, we know $G\cup\{\mathbf b_k\}$ is non-special (it is a subset of $F$), and we know $G\cup\{\mathbf a_i\}$ is special (it contains $(G\smallsetminus\{\mathbf b_j\})\cup\{\mathbf a_i\}$ which is special by assumption). Hence the dimensions of the two spaces in (\ref{ws}) are not equal, and we have a contradiction. Therefore we must have had $\left(G\smallsetminus\{\mathbf b_j\}\right)\cup\{\mathbf a_i,\mathbf b_k\}$ non-special.
\end{proof}

As a corollary of this proposition, we obtain the following theorem, which will be one of our main tools for constructing reductions. In fact, we can think of the reductions constructed by Dumnicki in \cite{MR2289179} (by diagram-cutting) and \cite{MR2325918} (see Remark \ref{redremark})  as applications of Theorem \ref{corfirst}.

\begin{theorem}\label{corfirst}
For any triple $(n,D,\mathbf m)$, if $\mathcal F(D,c,m_r)$ is non-empty for some $c>0$, then its minimal element with respect to any monomial ordering $\preceq$ is a reduction.
\end{theorem}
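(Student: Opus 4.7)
The plan is to verify directly the two defining properties of an exceptional partition (Definition \ref{exceptional}) for the reduction-type partition $\mathbf E=(\varnothing,\ldots,\varnothing,E)$, with the square submatrix $(U',E)$ chosen so that $U'\subseteq U_r$.

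First I would construct $U'$. Since $E\in\mathcal F(D,c,m_r)$, the triple $(n,E,(m_r))$ is non-special; together with the tacit size constraint $c\leq\binom{m_r+n-1}{n}=\#U_r$ (without which no square submatrix $(U',E)$ with $U'\subseteq U_r$ could exist, so no reduction is possible at all), this means $M_E((m_r))$ has full column rank $c$. Thus there exists $U'\subseteq U_r$ with $\#U'=c$ and $\det(U',E)\neq 0$. Using the convention $\det(\varnothing,\varnothing)=1$, one reads off from the formula for $\sigma(\mathbf E)$ that
$$\sigma(\mathbf E)=\pm\det(U',E)\neq 0,$$
which is property (\ref{nondegen}) of Definition \ref{exceptional}.

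Next I would verify the uniqueness property (\ref{unique}). Let $D''\subseteq D$ with $\#D''=c$, and let $\mathbf F\in\mathcal P(U',D'')$ be a partition different from $\mathbf E$ with $\mathbf a_i(\mathbf F)=\mathbf a_i(\mathbf E)$ for all $i$. Since $U'\subseteq U_r$, we are forced to have $\mathbf F=(\varnothing,\ldots,\varnothing,D'')$, so the hypothesis reads $D''\neq E$ and $\sum_{\mathbf a\in D''}\mathbf a=\sum_{\mathbf a\in E}\mathbf a$. Proposition \ref{firsts} asserts that for every $F\in\mathcal F(D,c,m_r)$ we have $\sum_{\mathbf a\in E}\mathbf a\preceq\sum_{\mathbf b\in F}\mathbf b$ with equality only when $F=E$; therefore $D''\notin\mathcal F(D,c,m_r)$, i.e.\ $(n,D'',(m_r))$ is special. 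Because $\#D''=c\leq\#U_r$, speciality here means $\rk M_{D''}((m_r))<c$, so every $c\times c$ minor of $M_{D''}((m_r))$ vanishes; in particular $\det(U',D'')=0$, whence $\sigma(\mathbf F)=\pm\det(U',D'')=0$, as required.

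The bulk of the content is concentrated in the uniqueness step, and the only genuinely subtle input is the passage from ``$E$ is $\preceq$-lex minimal'' to ``$E$ has the smallest coordinate sum,'' which is exactly the content of Proposition \ref{firsts}; everything else is bookkeeping from the definitions, together with the elementary observation that speciality of an over- or well-determined triple is the vanishing of every maximal minor of the relevant matrix.
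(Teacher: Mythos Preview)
Your argument is correct and follows the same route as the paper: invoke Proposition \ref{firsts} to rule out any non-special competitor $D''$ with the same coordinate sum, and observe that a special $D''$ automatically has $\sigma(\mathbf F)=0$. If anything, your write-up is more careful than the paper's, which compresses both cases into the single line ``$\mathbf a_r(\varnothing,\ldots,\varnothing,F)\succneqq\mathbf a_r(\varnothing,\ldots,\varnothing,E)$'' and leaves the construction of $U'$ and the verification of property (\ref{nondegen}) implicit; you spell these out, and you also make explicit the size constraint $c\le\binom{m_r+n-1}{n}$ that the statement tacitly requires.
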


\begin{proof}
Let $E$ be the minimal element of $\mathcal F(D,c,m_r)$. Then by Proposition \ref{firsts}, we know that for any $F\subseteq D$ with $\#F=\#E$ and $F\neq E$, 
$$\mathbf a_r(\varnothing,\ldots,\varnothing,F)\succneqq\mathbf a_r(\varnothing,\ldots,\varnothing,E).$$
\end{proof}

One useful feature of Theorem \ref{corfirst} is that once we know $\mathcal F(D,c,m_r)$ is non-empty, we may choose any monomial ordering and obtain a reduction. In particular, if we are building a reduction algorithm, we may use a different monomial ordering in each step. We can capitalize on this idea with the following algorithm aimed at proving non-speciality.

\begin{algorithm}\label{algorithm0}~

INPUT: A triple $(n,D,\mathbf m)$, and an ordered $r$-tuple of monomial orderings $(\preceq_1,\ldots,\preceq_r)$.

OUTPUT: A lower bound on $\dim V^n_D(\mathbf m)$ and either ``non-special'' or ``undecided''.

ALGORITHM:
Define $D_r=D$. Recursively take the largest $c_i\leq{m_i+n-1\choose n}$ for which $\mathcal F_i=\mathcal F(D_i,c_i,m_i)$ is non-empty, let $E_i$ be the minimial element of $\mathcal F_i$, and define $D_{i-1}=D_i\smallsetminus E_i$. Output $\dim V^n_D(\mathbf m)\leq\#D_0$. If $\#D_0=\edim(n,D,\mathbf m)$, then output ``non-special''. Otherwise, output ``undecided''.
\end{algorithm}

Notice that Algorithm \ref{algorithm0} will either prove the non-speciality a triple, or it will come out inconclusive---it cannot prove speciality.

One of the drawbacks of applying Algorithm \ref{algorithm0} is that computing the minimal element of $\mathcal F_i$ may be quite difficult. In particular, the somewhat na\"ive approach of using Corollary \ref{nonspeccond} to test each element of $\mathcal E(D_i,c_i)$ (in the order determined by the well-ordering) for speciality until the minimal $E_i\in\mathcal F_i$ is found (or until $\mathcal F_i$ is found to be empty) may be very computation-heavy. 

However, the following generalization of a lemma of Dumnicki allows us to obviate the linear algebra test for speciality for a large class of examples.

\begin{lemma}\label{anynaf}
Suppose $(n,E,(m))$ is an over- or well-determined triple. For some $k\leq m$, let $H_1,\ldots,H_k$ be distinct dimension-$n$ subspaces of $\Q^{n+1}$ which intersect $\tilde D(d)$ in parallel hypersurfaces of $\tilde D(d)$. Let $E'=E\cap(H_1\cup\cdots\cup H_k)$.
\begin{enumerate}
\item\label{anynno1} Suppose
$$\#E'>\sum_{i=1}^k{m-i+n-1\choose n-1}={m+n-1\choose n}-{m-k+n-1\choose n}.$$
Then $(n,E,(m))$ is special.
\item\label{anynyes} Suppose that $E=E'$, and that for $i=1,\ldots,k$
\begin{equation}\label{gennonspeccond}
\#E\cap H_i\leq{m-i+n-1\choose n-1}.
\end{equation}
If for each $i$,
\begin{equation}\label{gennonspeccond2}
\dim W^{H_i}(m-i+1,E\cap H_i)={m-i+n-1\choose n-1}-\#(E\cap H_i),
\end{equation}
then $(n,E,(m))$ is non-special.
\end{enumerate}
\end{lemma}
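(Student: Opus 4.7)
The plan is to slice $\tilde D(d)$ by the parallel affine hyperplanes $H_i\cap\tilde D(d)$ and use a Newton-basis decomposition of degree-$\le(m{-}1)$ polynomials that renders the vanishing conditions at $E'$ upper-triangular. I would begin by choosing affine coordinates $(\lambda,\mu_1,\ldots,\mu_{n-1})$ on $\tilde D(d)\cong\Q^n$ so that $H_i\cap\tilde D(d)=\{\lambda=c_i\}$ for distinct scalars $c_1,\ldots,c_k$, and setting $\tau_i(\lambda)=\prod_{j=1}^{i}(\lambda-c_j)$. Because $\tau_0,\ldots,\tau_{m-1}$ is a basis of $\Q[\lambda]_{\le m-1}$, every polynomial of degree $\le m-1$ on $\tilde D(d)$ (identified with $H^0\ocal_{\proj^n}(m-1)$ via restriction) admits a unique expansion $f=\sum_{i=0}^{m-1}g_i(\mu)\,\tau_i(\lambda)$ with $\deg g_i\le m-1-i$. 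Since $\tau_i(c_j)=0$ for $i\ge j$, the restriction $f|_{H_j}$ equals $\sum_{i=0}^{j-1}g_i(\mu)\,\tau_i(c_j)$, so the evaluation functional at any $\mathbf a\in E\cap H_j$ depends only on $g_0,\ldots,g_{j-1}$.

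For Part~(\ref{anynno1}), this factorization shows the $\#E'$ evaluation conditions at $E'$ factor through the projection $(g_0,\ldots,g_{m-1})\mapsto(g_0,\ldots,g_{k-1})$ onto a space of dimension $\sum_{i=0}^{k-1}\binom{m-1-i+n-1}{n-1}=\sum_{i=1}^{k}\binom{m-i+n-1}{n-1}$ (hockey stick). Hence the rank of the $E'$-conditions is at most this sum, and by subadditivity the rank at $E=E'\sqcup(E\smallsetminus E')$ is at most $\sum_{i=1}^{k}\binom{m-i+n-1}{n-1}+\#(E\smallsetminus E')$. The hypothesis $\#E'>\sum_{i=1}^{k}\binom{m-i+n-1}{n-1}$ then forces this strictly below $\#E$, so the conditions are dependent and $\dim W^n(m{-}1,E)>\binom{m+n-1}{n}-\#E$; speciality now follows from Corollary~\ref{nonspeccond}.

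For Part~(\ref{anynyes}), with $E=E'$, I would verify that the triangular system has full rank $\#E$. Once $g_0,\ldots,g_{j-2}$ are fixed, the condition from $E\cap H_j$ reduces to an interpolation requirement on $g_{j-1}$: it must take a prescribed value (linear in the prior $g_i$) at each $\mu$-coordinate of $E\cap H_j$. Hypothesis~(\ref{gennonspeccond2}) is exactly the statement that the kernel of the evaluation map from polynomials of degree $\le m-j$ on $H_j$ (ambient dimension $\binom{m-j+n-1}{n-1}$) to values on $E\cap H_j$ has the expected codimension $\#(E\cap H_j)$; by rank-nullity this evaluation map is surjective, so the step-$j$ interpolation is solvable and its solution space has dimension $\binom{m-j+n-1}{n-1}-\#(E\cap H_j)$. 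Iterating over $j=1,\ldots,k$ with $g_k,\ldots,g_{m-1}$ left free and summing contributions gives $\dim W^n(m{-}1,E)=\binom{m+n-1}{n}-\#E$ (another hockey-stick collapse), the expected dimension.

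The main obstacle is the rank verification in Part~(\ref{anynyes}): each step of the triangular interpolation must be simultaneously solvable for arbitrary right-hand sides coming from prior choices and contribute exactly $\#(E\cap H_j)$ to the total rank. The Newton-basis decomposition is what makes this feasible by isolating each $H_j$-condition as a pure interpolation condition on $g_{j-1}$ once earlier $g_i$'s are fixed, and hypothesis~(\ref{gennonspeccond2}) is precisely the non-speciality statement the $j$th step requires. Everything else is bookkeeping with binomial identities.
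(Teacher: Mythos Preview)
Your proof is correct but follows a different route from the paper's. For Part~(\ref{anynno1}), the paper exhibits elements of $W^n(m-1,E')$ directly: the product $L_1\cdots L_k$ of linear forms cutting out the $H_i$ has degree $k$, and multiplying it by arbitrary forms of degree $m{-}1{-}k$ yields a $\binom{m-k+n-1}{n}$-dimensional subspace of $W^n(m-1,E')$, forcing $(n,E',(m))$ and hence $(n,E,(m))$ to be special via Corollary~\ref{nonspeccond}. Your argument is the dual of this---you bound the rank of the evaluation map from above rather than the kernel from below---but it rests on the same factorization, since your ``high'' part $\sum_{i\ge k}g_i\tau_i$ is exactly the degree-$(m{-}1)$ piece of the ideal $(L_1\cdots L_k)$. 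For Part~(\ref{anynyes}), the paper first reduces to the well-determined case $k=m$, $\#E=\binom{m+n-1}{n}$, and then argues by contradiction: any nonzero $f\in W^n(m-1,E)$ is shown inductively to vanish on $H_1,H_2,\ldots$, eventually contradicting $\deg f=m-1$. Your Newton-basis argument is a quantitative version of the same peeling idea---the block-triangular evaluation matrix with nonzero diagonal scalars $\tau_{j-1}(c_j)$ encodes the paper's induction---but it handles the general over-determined case directly without the preliminary reduction and makes hypothesis~(\ref{gennonspeccond2}) visibly the full-row-rank condition on each diagonal block. The paper's version is shorter and more geometric; yours is more uniform across the two parts and computes $\dim W^n(m-1,E)$ explicitly rather than only showing it vanishes in the extremal case.
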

\begin{proof}~
\begin{enumerate}
\item First notice that since $\#E\leq{m+n-1\choose n}$, the assumption implies $k<m$. Now, there exists a nontrivial $f\in W^n(k,E')$ vanishing on $H_1\cup\cdots\cup H_k$. Hence $f$ vanishes on $E'$, which implies by Corollary \ref{nonspeccond} that $(n,E',(m))$ is special. Then since $E\supseteq E'$, $E$ must be special as well.
\item By Corollary \ref{nonspeccond}, it suffices to prove this for the case where $k=m$ and $\#E={m+n-1\choose n}$. In this case, we necessarily have equality in (\ref{gennonspeccond}) for all $i$.

Suppose, by way of contradiction, there exists a nontrivial $f\in W^n(m-1,E)$, and let $S$ be its vanishing set. Let $1\leq i<m$ and assume $S\supset H_j$ for all $j<i$ (vacuously if $i=1$). Then $S$ consists of the union of all $H_j$ with $j<i$ together with a degree $m-i+1$ hypersurface. Therefore, since the $H_j$ are disjoint, either $S\cap H_i$ has degree $m-i+1$ or it is all of $H_i$. However, by our assumption, (\ref{gennonspeccond2}) reduces to $W^{H_i}(m-i+1,E\cap H_i)=0$, so we see that the former possibility is prohibited. Hence $S\supset H_i$. Therefore, by induction $S$ contains $H_1,\ldots,H_{m-1}$, but since $S$ has degree $m-1$, their union must be all of $S$. But $S$ was also supposed to contain the single point in $H_m$, and hence we have a contradiction.
\end{enumerate}
\end{proof}

\begin{example}
In Figure \ref{specialfig}, we illustrate a subset $E$ of $D(7)$. Notice that $\#E'=19$ but ${7\choose2}-{3\choose2}=18$. Hence by  Condition \ref{anynno1} of Lemma \ref{anynaf}, $(2,E,(6))$ is special.
\begin{figure}
\centering
\includegraphics{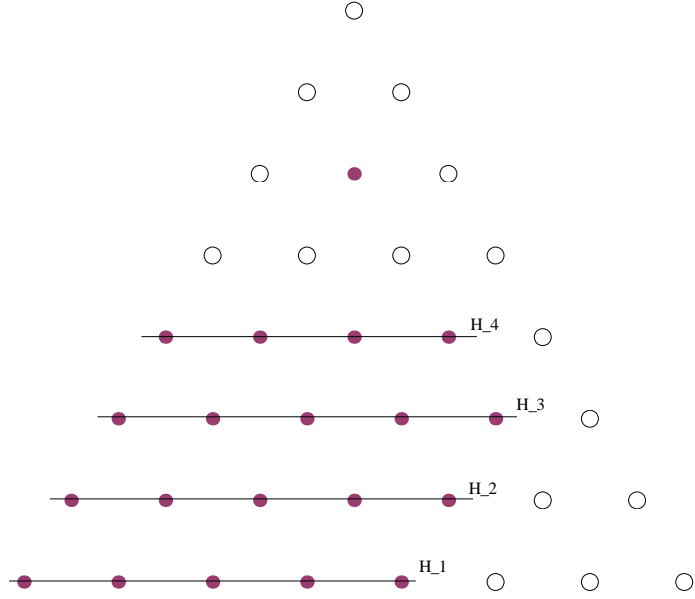}
\caption{A subset $E$ of $D(7)$ so that $(2,E,(6))$ is special by Condition \ref{anynno1} of Lemma \ref{anynaf}. The intersections $H_i\cap \tilde D(7)$ are illustated for clarity.}\label{specialfig}
\end{figure}
\end{example}

One powerful aspect of Lemma \ref{anynaf} to notice is that using Condition \ref{anynyes}, we can use our knowledge of non-speciality in low dimensions to determine non-speciality in higher dimensions. This arises from the fact that we can re-phrase (\ref{gennonspeccond2}) as
$$\text{$E\cap H_i$ is non-special as a subset of $D^{n-1}(d')\subset H^i\cong\Q^n$.}$$

First we notice that the case $n=1$ is trivial: $(1,E,(m))$ is always non-special. This follows directly from Corollary \ref{nonspeccond}. From here, we apply Lemma \ref{anynaf} in two ways: first by applying Condition \ref{anynyes} to construct a large class of non-special $(n,E,(m))$ for general $n$; then by describing the case where $n=2$ as thoroughly as possible so that we can apply it to specific examples effectively.

\begin{definition}
Define a \emph{scrambled $1$-simplex of size $m$} to be any set of $m$ colinear points in $D(d)$.

Then, recursively define a \emph{scrambled $k$-simplex of size $m$} to be a set $E\subseteq D(d)$ so that there exist $m$ distinct dimension-$(k{-}1)$ subspaces, $H_1,\ldots,H_m$ of $\Q^{n+1}$ whose intersections with $\tilde D(d)$ are parallel, such that $E\subseteq H_1\cup\cdots\cup H_m$, and $E\cap H_i$ is a scrambled $(k{-}1)$-simplex of size $i$. 
\end{definition}

\begin{example}\label{skew2ex}
In Figure \ref{skew2fig}, we illustrate a scrambled $2$-simplex of size $4$. Notice that $E\cap H_i$ is a set of $i$ colinear points for $i=1,2,3,4$.
\begin{figure}
\centering
\includegraphics{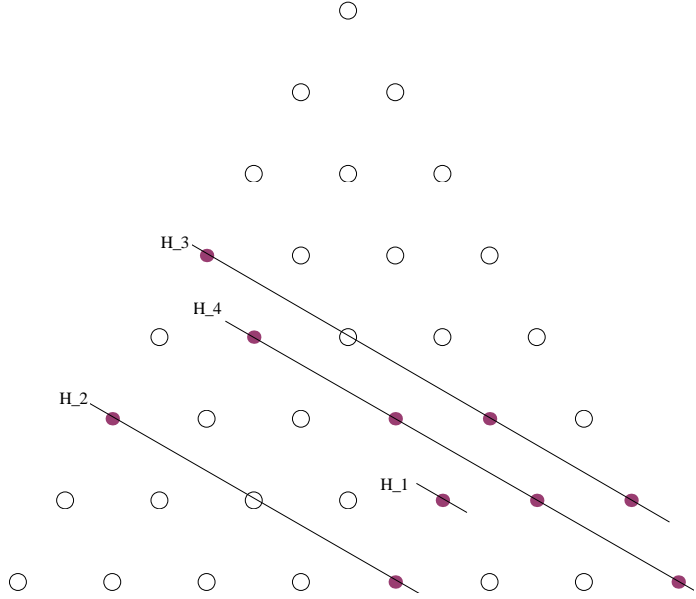}
\caption{A scrambled $2$-simplex of size $4$ in $D(7)$. The intersections $H_i\cap\tilde D(7)$ are illustrated as well for clarity.}\label{skew2fig}
\end{figure}
\end{example}

\begin{proposition}\label{skewsimplex}
Any subset $E$ of a scrambled $n$-simplex of size $m$ has the property that $(n,E,(m))$ is non-special.
\end{proposition}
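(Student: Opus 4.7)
The plan is to induct on $n$. The base case $n=1$ is immediate: a scrambled $1$-simplex of size $m$ is a set of $m$ collinear points in $D^1(d)\subset\Q^2$, and every triple $(1,E,(m))$ is non-special (a direct consequence of Corollary \ref{nonspeccond}, as noted in the paragraph preceding the definition).

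For the inductive step, assume the proposition holds in dimension $n-1$ and let $E^*$ be a scrambled $n$-simplex of size $m$. A simple induction on $n$ using the recursive definition shows that $\#E^*=\sum_{i=1}^m{i+n-2\choose n-1}={m+n-1\choose n}$, so $(n,E^*,(m))$ is well-determined. By the second sentence of Corollary \ref{nonspeccond}, it therefore suffices to prove that $(n,E^*,(m))$ itself is non-special; every subset $E\subseteq E^*$ will then automatically inherit non-speciality.

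To establish non-speciality of $(n,E^*,(m))$, I would invoke Lemma \ref{anynaf}, Condition \ref{anynyes}, with the hyperplanes $H_1,\ldots,H_m$ furnished by the scrambled $n$-simplex structure of $E^*$, re-indexed so that $E^*\cap H_i$ is a scrambled $(n-1)$-simplex of size $m-i+1$. The point-count identity above then yields $\#(E^*\cap H_i)={m-i+n-1\choose n-1}$, which achieves equality in (\ref{gennonspeccond}). By the inductive hypothesis, each triple $(n-1,E^*\cap H_i,(m-i+1))$ is non-special, and since each such triple is well-determined, Corollary \ref{nonspeccond} converts that non-speciality into precisely the dimensional identity (\ref{gennonspeccond2}) needed for each slice. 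Lemma \ref{anynaf} then concludes that $(n,E^*,(m))$ is non-special, closing the induction.

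The main obstacle, such as it is, lies in the indexing and binomial bookkeeping: matching the slice size $m-i+1$ from the recursive definition to the binomial coefficient ${m-i+n-1\choose n-1}$ appearing in Lemma \ref{anynaf}, and confirming that each slice lands in the well-determined regime so that the inductive hypothesis and Corollary \ref{nonspeccond} together reproduce exactly the hypothesis of the lemma. Once this correspondence is verified, the argument is a clean nested application of Lemma \ref{anynaf} and Corollary \ref{nonspeccond}.
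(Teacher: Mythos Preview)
Your argument is correct and is precisely the approach the paper intends: induction on $n$, with the base case $n=1$ handled by Corollary \ref{nonspeccond} and the inductive step by Condition \ref{anynyes} of Lemma \ref{anynaf}. Your reduction to the full scrambled simplex $E^*$ via the second clause of Corollary \ref{nonspeccond}, the re-indexing of the $H_i$ so that the slice sizes decrease, and the binomial bookkeeping matching $\#(E^*\cap H_i)={m-i+n-1\choose n-1}$ are all exactly what the paper's one-line proof leaves implicit.
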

\begin{proof}
Use Condition \ref{anynyes} of Lemma \ref{anynaf} and induction on $n$.
\end{proof}

We now specialize to the case of $n=2$. In this case, we define a \emph{row} in $D\subseteq D(d)$ to be the (possibly empty) intersection of $D$ with a dimension-$2$ subspace of $\Q^3$.

\begin{proposition}\label{af}
Suppose $(2,E,(m))$ is an over- or well-determined triple.
\begin{enumerate}
\item\label{no} If there are $k<m$ parallel rows $R_1,\ldots,R_k$ so that 
$$\#(R_1\cup\cdots\cup R_k)\cap E>{m+1\choose 2}-{m+1-k\choose2},$$
then $(2,E,(m))$ is special.
\item\label{yes} If $E$ is contained in the union of $m$ parallel rows $R_1,\ldots,R_m$, so that $\#R_i\cap E\leq i$ for all $i$, then $(2,E,(m))$ is non-special.
\end{enumerate}
\end{proposition}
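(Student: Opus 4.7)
The plan is to recognize Proposition \ref{af} as the $n=2$ specialization of Lemma \ref{anynaf}, after translating between the language of ``rows'' and that of ``parallel dimension-$n$ subspaces of $\Q^{n+1}$.'' A row in $D \subseteq D(d)$ is by definition the intersection of $D$ with a dimension-$2$ subspace of $\Q^3$, and parallel rows correspond to dimension-$2$ subspaces whose intersections with $\tilde D(d)$ are parallel lines, so the geometric hypotheses of the two statements describe the same configurations.

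For Part \ref{no}, I would apply Lemma \ref{anynaf}.\ref{anynno1} directly with $n=2$. The only thing to check is the combinatorial identity
$$\sum_{i=1}^k \binom{m-i+n-1}{n-1}\bigg|_{n=2} \;=\; \sum_{i=1}^k (m-i+1) \;=\; \binom{m+1}{2} - \binom{m+1-k}{2},$$
which is a short telescoping calculation. Given this identity, the hypothesis of Proposition \ref{af}.\ref{no} matches the hypothesis of Lemma \ref{anynaf}.\ref{anynno1} exactly, and specialness follows.

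For Part \ref{yes}, the rows in Proposition \ref{af} are indexed in the opposite order from the $H_i$ in Lemma \ref{anynaf}: here $R_i$ is allowed up to $i$ points, whereas in the lemma $H_i$ is allowed up to $m-i+1$ points. So I would reindex by setting $H_i = R_{m+1-i}$, after which the condition $\#R_j \cap E \leq j$ becomes $\#(E \cap H_i) \leq m-i+1 = \binom{m-i+n-1}{n-1}$ for $n=2$, which is exactly condition (\ref{gennonspeccond}). It then remains to verify condition (\ref{gennonspeccond2}): since each $H_i$ is a $2$-dimensional subspace of $\Q^3$, $\proj H_i \cong \proj^1$, and every triple $(1,F,(m'))$ is non-special (as noted just before Proposition \ref{skewsimplex}), so by Corollary \ref{nonspeccond} applied inside $\proj^1$ the required dimension equality holds automatically.

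The argument involves no new ideas beyond Lemma \ref{anynaf}; the only things to verify are the combinatorial identity in Part \ref{no} and the order-reversing reindexing in Part \ref{yes}, and neither constitutes a substantial obstacle. The ``hard work'' has really already been done in proving Lemma \ref{anynaf}---Proposition \ref{af} is the user-friendly $n=2$ restatement that will actually be applied to examples in later sections.
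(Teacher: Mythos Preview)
Your proposal is correct and essentially matches the paper's own argument. The paper proves Part~\ref{no} by a one-line appeal to Lemma~\ref{anynaf}.\ref{anynno1} (your verification of the binomial identity is the only detail being checked), and proves Part~\ref{yes} by observing that the hypothesis says exactly that $E$ is a subset of a scrambled $2$-simplex of size $m$ and then invoking Proposition~\ref{skewsimplex}; since Proposition~\ref{skewsimplex} is itself proved by Lemma~\ref{anynaf}.\ref{anynyes} plus the $n=1$ base case, your direct appeal to Lemma~\ref{anynaf}.\ref{anynyes} with the reindexing $H_i=R_{m+1-i}$ amounts to the same thing unwound one level.
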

\begin{proof}~
\begin{enumerate}
\item Apply Condition \ref{anynno1} of Lemma \ref{anynaf}.
\item This condition is equivalent to saying $E$ is a subset of a scrambled $2$-simplex of size $m$. Apply Proposition \ref{skewsimplex}.
\end{enumerate}
\end{proof}

Our goal now is to apply Proposition \ref{af} in such a way as to completely avoid the (possibly computation-heavy) linear algebra test of Corollary \ref{nonspeccond}. Roughly speaking, for a given monomial ordering $\preceq$, our strategy for $n=2$ will be to take the largest $c$ we can think of for which we can determine the minimal element of $\mathcal F(D,c,m)$ using only Proposition \ref{af}. That is, take the largest $c$ for which we can show that the minimal $E\in\mathcal E(D,c)$ satisfying Condition \ref{yes} in Corollary \ref{af} is greater only than elements of $\mathcal E(D,c)$ which satisfy Condition \ref{no}. This will show \emph{a fortiori} that $E$ is minimal in $\mathcal F(D,c,m)$.

Let $\preceq$ be a lexicographic (resp. reverse lexicographic) monomial ordering on $\mathbb K[X_0,X_1,X_2]$, say with the convention that $X_{i_0}\prec X_{i_1}\prec X_{i_2}$. Now we define a \emph{$\preceq$-row} in $D$ to be a row of the form $R(k)=\{(a_0,a_1,a_2)\in D|a_{i_0}=k\}$. Then we can define an ordering of $\preceq$-rows
$$R(k)\succeq R(l)\Longleftrightarrow k\leq l\;\text{(resp. $k\geq l$)}.$$
Now for any two $\preceq$-rows $R_i$ and $R_j$, if $\mathbf a\in R_i$, $\mathbf b\in R_j$ and $R_i\succ R_j$, then $\mathbf a\succ\mathbf b$.

We can now present a generalization of Dumnicki and Jarnicki's notion of ``weak $m$-reduction'' from  \cite{MR2325918}.

\begin{definition}\label{reddef}
Let $\preceq$ be a lexicographic or reverse lexicographic monomial ordering on $\mathbb K[X_0,X_1,X_2]$ with the convention
$$X_{i_0}\prec X_{i_1}\prec X_{i_2}.$$
Suppose $D\subseteq D(d)$. Suppose there are $\bar k$ non-empty $\preceq$-rows in $D$. Then let $k=\min\left\{m,\bar k\right\}$, and name the minimal $k$ $\preceq$-rows in $D$
$$R_1\prec\cdots\prec R_k.$$
Let $\Omega_1=\{1,\ldots,m\}$. Recursively define for $1\leq j\leq k$,
$$u_j=\min\{\max\Omega_j,\#R_j\},\quad u'_j=\min\{s\in\Omega_j|s\geq u_j\},\quad \Omega_{j+1}=\Omega_j\smallsetminus\{u'_j\}.$$
Then define the \emph{$(m,\preceq)$-reduction of $D$} to be
$$\red_\preceq(D,m)=\bigcup_{j=1}^m\{\text{minimal $u_j$ elements of $R_j$}\}.$$
\end{definition}

\begin{example}\label{reductionex}
Figure \ref{reductionfig} shows the $(5,\preceq)$-reduction of a subset $D$ of $D(7)$, where $\preceq$ is the reverse lexicographic ordering with $X_1\prec X_0\prec X_2$. Here is the step-by-step construction:
$$\begin{array}{c|c|c|c}
i&\Omega_i&u_i&u_i'\\
\hline
1&\{1,2,3,4,5\}&3&3\\
2&\{1,2,4,5\}&3&4\\
3&\{1,2,5\}&4&5\\
4&\{1,2\}&2&2\\
5&\{1\}&1&1
\end{array}$$

\begin{figure}
\centering
\includegraphics{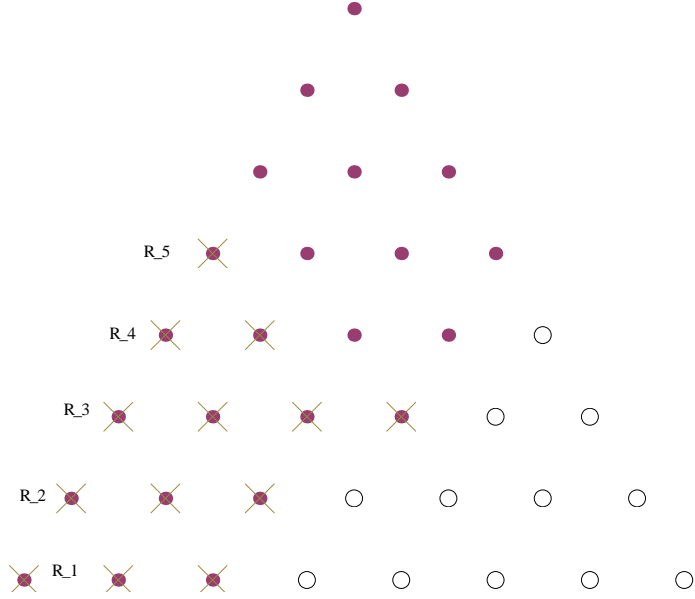}
\caption{The $(5,\preceq)$-reduction of a subset $D$ of $D(7)$, where $\preceq$ is the reverse lexicographic ordering with $X_1\prec X_0\prec X_2$. The points of $D$ are denoted by solid dots, and the points of $D'=\red_\preceq(D,5)$ are denoted by $\times$'s.}\label{reductionfig}
\end{figure}
\end{example}

The following Lemma justifies the re-use of the word ``reduction''.

\begin{lemma}\label{redlemma}
Given a triple $(2,D,\mathbf m)$,  let $D'=\red_\preceq(D,m_r)$. Then for some $G\subseteq D(d)$ containing $D$, $D'\cup (G\smallsetminus D)$ is a reduction for $(n,G,\mathbf m)$. 
\end{lemma}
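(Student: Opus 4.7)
The plan is to invoke Theorem \ref{corfirst} on the triple $(2, G, \mathbf{m})$ for a carefully chosen $G \supseteq D$, by showing that $D'' := D' \cup (G \smallsetminus D)$ is the $\preceq$-lexicographic minimum of $\mathcal{F}(G, c, m_r)$ with $c := \#D'' = \sum_{j=1}^{k} u_j'$. The set $G$ I would take is obtained from $D$ by adding, for each $j$ with $u_j' > u_j$, exactly $u_j' - u_j$ new points to $R_j$, chosen from $R_j \cap D(d) \smallsetminus D$ and placed $\preceq$-above the existing elements of $D' \cap R_j$. By construction, $D'' \cap R_j$ consists of the $\preceq$-minimal $u_j'$ elements of $R_j \cap G$ for every $j$.

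The non-speciality of $(2, D'', (m_r))$ follows from Condition \ref{yes} of Proposition \ref{af}. The values $u_1', \ldots, u_k'$ are, by the recursive definition of the $\Omega_j$, $k$ distinct elements of $\{1, \ldots, m_r\}$, so I can embed $R_1, \ldots, R_k$ into a collection of $m_r$ parallel $\preceq$-rows of $D(d)$ and relabel so that $R_j$ plays the role of the $u_j'$-th row (the unused indices correspond to rows with empty intersection with $D''$). Then the $i$-th row meets $D''$ in at most $i$ points for each $i = 1, \ldots, m_r$.

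The heart of the proof is showing that $D''$ is the $\preceq$-lex minimum of $\mathcal{F}(G, c, m_r)$. Let $E \in \mathcal{F}(G, c, m_r)$ with $E \preceq D''$, and write $s_j := \#R_j \cap E$. I would induct on $j$, establishing $s_j = u_j'$ and $E \cap R_j = D'' \cap R_j$. The lex-comparison at sorted positions forces $s_j \geq u_j'$ and $E \cap R_j$ to begin with the $\preceq$-minimal elements of $R_j \cap G$; the reverse inequality $s_j \leq u_j'$ requires combining the row-size constraint $s_j \leq \#R_j \cap G$ with Condition \ref{no} of Proposition \ref{af} applied to well-chosen subsets of rows containing $R_j$. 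The key structural inequality is $\sum_{i \in T} u_i' \leq |T|\, m_r - \binom{|T|}{2}$ for any $T \subseteq \{1, \ldots, k\}$, which holds because $\{u_i' : i \in T\}$ is a set of $|T|$ distinct elements of $\{1, \ldots, m_r\}$; the algorithm's greedy choice of $u_j$ ensures that this bound is tight on precisely the subset of rows needed to exclude any candidate excess $s_j > u_j'$.

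Once $D''$ is identified as the unique $\preceq$-lex minimum, Theorem \ref{corfirst} yields the conclusion. The main obstacle is the sub-case in which $u_j = u_j'$ but $\#R_j \cap D > u_j$, so that $\#R_j \cap G > u_j'$ and the row-size constraint alone no longer forces $s_j \leq u_j'$: here one must pinpoint which subset of rows has its Condition \ref{no} bound violated by a hypothetical excess $s_j > u_j'$, which requires unwinding the recursive structure of the algorithm that produces the $u_j'$ and using the fact that any ``mass transfer'' from a later row $R_{j'}$ to $R_j$ in $E$ (to compensate so that $\sum s_i = c$) lands $E$ in an excluded regime of Condition \ref{no} for the pair $\{R_j, R_{j'}\}$ or some larger block.
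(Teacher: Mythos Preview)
Your approach coincides with the paper's: same choice of $G$ (adding $u_j' - u_j$ points to row $R_j$ for each $j$), same verification that $D'' \in \mathcal{F}(G, c, m_r)$ via Condition~\ref{yes} of Proposition~\ref{af}, and same appeal to Theorem~\ref{corfirst} once $\preceq$-minimality is established. The paper organizes the minimality argument slightly differently---rather than inducting row by row, it locates the first position $i$ at which a competitor $F \prec D''$ differs, shows that $\mathbf{b}_i$ must land in the preceding row $R_{j-1}'$, and deduces $\#(F \cap R_{j-1}') > u_{j-1}'$---but this is equivalent to your inductive step at row $j-1$.

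Where your sketch is too vague is precisely the step you flag as the ``main obstacle.'' The general inequality $\sum_{i \in T} u_i' \le |T|\,m_r - \binom{|T|}{2}$ is true but is not what drives the argument, and the ``mass transfer to a pair $\{R_j, R_{j'}\}$'' picture is misleading. The actual mechanism (made explicit in the paper) is this: if $s_j > u_j'$, then $\#(G\cap R_j') > u_j'$, which forces $\#R_j > u_j$ and hence $u_j = \max \Omega_j = u_j'$ from the definition of $u_j$. Writing $\omega := u_j'$, the integers $\omega{+}1, \ldots, m_r$ are absent from $\Omega_j$, so each equals $u_l'$ for some $l < j$; by your inductive hypothesis $s_l = u_l'$ for those $l$. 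Taking these $m_r - \omega$ rows together with $R_j$ gives $k = m_r - \omega + 1$ parallel rows whose total intersection with $E$ strictly exceeds
\[
\omega + (\omega{+}1) + \cdots + m_r \;=\; \binom{m_r+1}{2} - \binom{\omega}{2},
\]
which is exactly the threshold in Condition~\ref{no} of Proposition~\ref{af}. Hence $E$ is special, contradicting $E \in \mathcal{F}(G,c,m_r)$. One further point you pass over: you must check that $R_j' \smallsetminus D$ actually contains $u_j' - u_j$ points to add. The paper verifies $u_j' \le \#R_j'$ in the lexicographic case via the chain $u_j' \le u_j + j - 1 \le \#R_1' + j - 1 \le \#R_j'$, and notes that the reverse-lexicographic case requires a separate (but analogous) choice of $G$.
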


\begin{remark}\label{redremark}
In \cite{MR2325918}, only one monomial ordering is considered. The details of the proof of the corresponding lemma from that paper are given in \cite{MR2342565}. The ``bean-counting'' aspect of our proof is mostly the same, but in the end we appeal to Theorem \ref{corfirst} to prove that we end up with a reduction.
\end{remark}

\begin{proof}
First, we prove the case where $\preceq$ is a lexicographic monomial order, and then note how the proof must be modified to cover the reverse lexicographic case. We use the notation of Definition \ref{reddef}. We assume without loss of generality that $\preceq$ is such that $X_0\prec X_1\prec X_2$.

For each row $R_i$, define $R_i'$ to be the row of $D(d)$ containing $R_i$. Notice that $R'_i=\{(a_0,a_1,a_2)\in D(d)|a_0=\alpha_i\}$ for some 
$$0\leq \alpha_k<\cdots<\alpha_1\leq d.$$
We note that $\#R'_i=d-\alpha_i+1\geq i$.

Notice that
$$u_j'\leq u_j+j-1\leq\max\Omega_j+j-1\leq\#R_1+j-1\leq\#R_1'+j-1\leq \#R_j'.$$
The first inequality is true because $\Omega_j$ must contain one of $u_j,u_j+1,\ldots,u_j+j-1$. Define
$$G=D\cup\bigcup_{j=1}^k\{\text{an additional $u'_j-u_j$ points from $R_j'$}\}.$$

Now, let $E=\red_\preceq(D,m)\cup(G\smallsetminus D)$. The claim is then that $E$ is the minimal element of $\mathcal F(G,\#E,m)$. First notice that $\#(E\cap R_j')=u'_j$ since either 
\begin{enumerate}
\item $u_j=\max\Omega_j$ so that $u_j=u_j'$ and then $G\cap R_j'=R_j$, or
\item $u_j=\#R_j$ so that $u_j'\geq\#R_j$ and $G\cap R_j'=E\cap R_j'=R_j\cup\{\text{$u_j'-\#R_j$ points}\}$.
\end{enumerate}
Since the $u_j'$ are by definition distinct integers no more than $m$, we know that $E$ is a subset of a scrambled $2$-simplex of size $m$. Hence $E\in\mathcal F(G,\#E,m)$.

Suppose that $F$ with $\#F=\#E$ has $F\prec E$. Assume $E=\{\mathbf a_1,\ldots,\mathbf a_c\}$ with $\mathbf a_i\prec\mathbf a_{i+1}$ and $F=\{\mathbf b_1,\ldots,\mathbf b_c\}$ with $\mathbf b_i\prec\mathbf b_{i+1}$. Let $i$ be minimal so that $\mathbf b_i\neq\mathbf a_i$, which implies $\mathbf b_i\prec\mathbf a_i$. 

Say $\mathbf a_i\in R'_j$. Then we know $\mathbf b_i$ is not in $R_j'$ because $E$ contains the minimal $u_j'$ elements of $G\cap R'_j$. Now $\mathbf a_{i-1}$ must be in either $R_j'$ or $R_{j-1}'$, but since $\mathbf b_i\succ\mathbf b_{i-1}=\mathbf a_{i-1}$ and $\mathbf b_i\notin R_j'$, we must have $\mathbf b_{i-1},\mathbf b_i\in R_{j-1}'$. In fact,
$$R_{j-1}'\ni\mathbf a_{l-u_{j-1}'}=\mathbf b_{l-u_{j-1}'}\prec\cdots\prec\mathbf b_l.$$
This shows that $F\cap R_{j-1}'$ contains more than $u'_{j-1}$ elements.

Now, because of this, we must have $\#F\cap R_{j-1}'>\omega$, where $\omega=\max\Omega_{j-1}$. Since $\Omega_{j-1}$ does not contain $\omega+1,\ldots,m$, there must have been $m-\omega$ rows preceding $R_{j-1}'$ whose intersections with $F$ contain $\omega+1, \ldots, m$ elements respectively. But then the number of elements in the union of these rows together with $R_{j-1}'$ intersected with $F$ is more than
$$\omega+(\omega+1)+\ldots+m={m+1\choose2}-{\omega\choose2}.$$
Hence by Proposition \ref{af}, Condition \ref{no}, $F$ is in fact special, showing that $E$ is in fact minimal in $\mathcal F(G,\#E,m)$. Therefore we may apply Theorem \ref{corfirst}, and so $E$ is a reduction.

In the reverse lexicographic case, we do not necessarily have $\#R_i'\geq i$. This requires us to choose $G$ more carefully, but the steps are the same as in the lexicographic case. With different notation, Dumnicki proves this case in \cite{MR2342565}.
\end{proof}

\begin{example}
\begin{figure}
\centering
\includegraphics{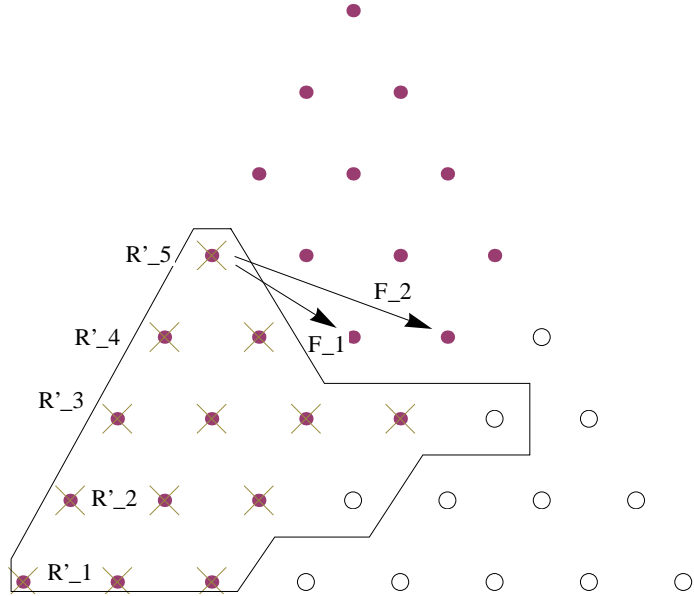}
\caption{The same illutration from Figure \ref{reductionfig} with additional labels. $D$ and $D'$ are as above. $E$ is the set of points inside the polygon, and $G$ is $E\cup D$. $F_i$ is obtained by taking $E$ and exchanging the point at the tail of the arrow labeled $F_i$ for the point at the head ($i=1,2$).}\label{augmentedfig}
\end{figure}
We return to Example \ref{reductionex} to demonstrate the proof. First, notice that $u_2'-u_2=1$, $u_3'-u_3=1$, and $u_j'=u_j$ for $j=1,4,5$. Hence $G$ is defined to be $D$ plus one extra point from each of $R_2'$ and $R_3'$ as shown in Figure \ref{augmentedfig}. It does not matter which extra points are chosen, but we know that there are enough to choose from.

Notice that $E=D'\cup (G\smallsetminus D)$ is a scrambled $2$-simplex of size $5$. Hence $E\in\mathcal F(G,15,5)$.

There are only two possible $F\subseteq G$ with $\#F=15$ and $F\prec E$; call them $F_1$ and $F_2$, as illustrated. In both cases, $F_i\cap R_4'>u_4'=2$, which necessarily means that previous rows of $F_i$ must have contained $3$, $4$, and $5$ elements---this is clearly the case. And because of this fact, $\#F_i\cap(R_1'\cup R_2'\cup R_3'\cup R_4')=15$, showing that $F_i\notin\mathcal F(G,15,5)$ for $i=1,2$ by Proposition \ref{af}. Therefore $E$ is a reduction of $G$.
\end{example}

In light of Lemma \ref{redlemma}, we are justified in writing the following algorithm.

\begin{algorithm}\label{algorithm1}~

INPUT: $d\in\N$, $\mathbf m\in\N^r$, an ordered $r$-tuple of lexicographic or reverse lexicographic monomial orderings $(\preceq_1,\ldots,\preceq_r)$.

OUTPUT: A lower bound on $\dim V^2(d,\mathbf m)$, and either ``non-special'' or ``undecided''.

ALGORITHM:
Define $D_r=D(d)$. Then recursively let $D_{i-1}=D_i\smallsetminus\red_{\preceq_i}(D_i,m_i)$ for $i=m,\ldots,1$. Output $\dim V^2(d,\mathbf m)\leq \#D_0$. If $\#D_0=\edim(2,D(d),\mathbf m)$, then output ``non-special''. Otherwise, output ``undecided''.
\end{algorithm}

\begin{remark}
Compared with Algorithm \ref{algorithm1}, Algorithm \ref{algorithm0} is more likely to detect the non-speciality of a triple because Algorithm \ref{algorithm1} does not necessarily maximize the number of elements in each reduction. However, Algorithm \ref{algorithm1} is much cheaper computationally. It requires no linear algebra calculations which appeal to Corollary \ref{nonspeccond}; only the combinatoric comparisons necessary to define the $(m,\preceq)$-reductions are needed. For calculations with $|\mathbf m|>100$, Algorithm \ref{algorithm0} becomes impractical.

We also note that Lemma \ref{anynaf} can be used to produce higher-dimensional analogues of Definition \ref{reddef} leading to higher-dimensional analogues of \ref{algorithm1}.

Finally, we remark this algorithm could likely be improved by incorporating other techniques such as Cremona transformations, as in the algorithms developed in \cite{MR2325918}. We forgo the use of other methods for simplicity and to highlight the power of Lemma \ref{redlemma} on its own (see for example the results in Section \ref{nagatasubsection}).
\end{remark}

Notice that there are $12$ possible monomial orderings that we can use for each $\preceq_i$ in the input of the above algorithm. This means there are $12^r$ possible $r$-tuples of monomial ordering we could potentially test.

We will use the notation $\lex(i_0,i_1,i_2)$ to denote the lexicographic ordering with $X_{i_0}\prec X_{i_1}\prec X_{i_2}$. We also denote by $\rlex(i_0,i_1,i_2)$ the reverse lexicographic ordering with $X_{i_2}\prec X_{i_1}\prec X_{i_0}$.

\begin{example}\label{alg1ex}
The reduction algorithm for $(2,D(7),3^{\times6})$ in Figure \ref{redpic} can be produced using Algorithm \ref{algorithm1} by using the sextuple of monomial orderings
$$\big(\lex(1,2,0),\lex(1,2,0),\lex(1,2,0),\lex(0,1,2),\rlex(1,2,0),\rlex(1,2,0)\big).$$

\begin{figure}
\centering
\includegraphics{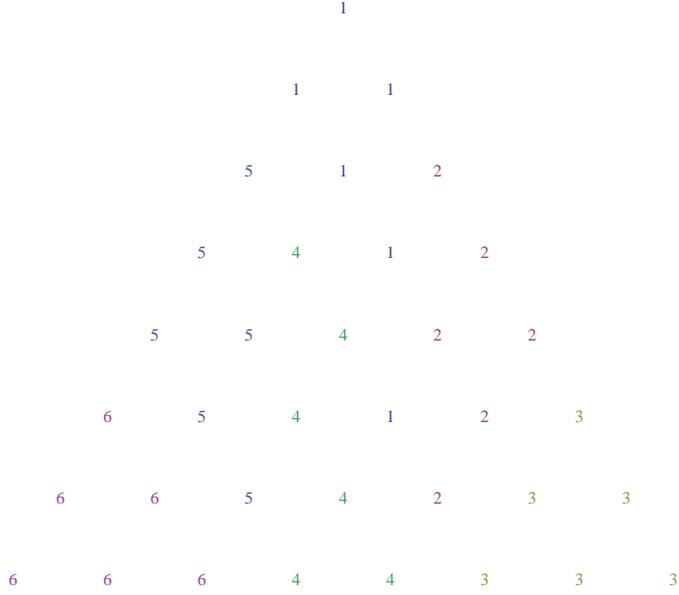}
\caption{A full reduction algorithm for $(2,D(7),3^{\times 6})$. $D_i\smallsetminus D_{i-1}$ consists of the points labeled by $i$ for $i=1,2,3,4,5,6$.}\label{redpic}
\end{figure}
\end{example}

\section{Application of Algorithms}\label{results}

We will apply Theorem \ref{genredalgtheorem} to examples stemming from two different areas of study. First, we produce new bounds on the multi-point Seshadri constants of $\proj^2$. Second, we recover a result from Evain \cite{MR2125451} which generalizes the known cases of Nagata's Conjecture to higher dimensions, proving the emptiness $\left(n,D(d),m^{\times s^n}\right)$ when $d\leq ms$ (with a few well-known exceptions).

\subsection{Bounding multi-point Seshadri constants of $\proj^2$.}\label{nagatasubsection}

\begin{definition}[See for example \cite{MR2555949}]
Let $(X,L)$ be a smooth polarized variety, $p_1,\ldots,p_r\in X$. Then we define the \emph{multi-point Seshadri constant of $L$ at $p_1,\ldots,p_r$} to be
$$\epsilon(L;p_1,\ldots,p_r):=\inf_{\text{curves $C$}}\frac{L.C}{\sum_{i=1}^r\mult_{p_i}C}.$$
If the points are taken to be very general and $(X,L)$ is understood, we simply write $\epsilon(r)$.
\end{definition}

Applying this to the polarized variety $(\proj^2,\ocal_{\proj^2}(1))$, we can equate curves with nonzero sections of $\ocal_{\proj^2}(d)$ for some $d$, up to nonzero scalar multiples. Then we can write
$$\epsilon(r)=\inf_{\substack{d\geq1,\mathbf m\in\N^r\\0\neq V^2(d,\mathbf m)}}\frac{d}{m_1+\cdots+m_r}.$$

In this language, Nagata's Conjecture states that for $r\geq9$, $\epsilon(r)=\frac 1{\sqrt r}$. That $\epsilon(r)$ is no more than $\frac 1{\sqrt r}$ can be proved from first principles (see for example \cite{MR2555949,MR2098342}), but only for $r$ a perfect square has equality been shown---in fact by Nagata in \cite{MR0088034}. 

To show that $\epsilon(r)\geq e$ for some constant $e$, one must prove that $V^2(d,\mathbf m)=0$ whenever $\frac{d}{m_1+\cdots+m_r}\leq e$ . One can check that for $r>9$, the condition $\frac{d}{m_1+\cdots+m_r}\leq e\leq\frac 1{\sqrt r}$ implies $(2,D(d),\mathbf m)$ is over-defined, and so showing $V^2(d,\mathbf m)=0$ is equivalent to showing that $(2,D(d),\mathbf m)$ is non-special.

In \cite{MR2574368}, Harbourne and Ro\'e construct, for each non-square $r>9$, an increasing sequence of rational numbers limiting to $\frac{1}{\sqrt r}$ with no other accumulation points, so that $\epsilon(r)$ must either be one of the values in that sequence or $\frac{1}{\sqrt r}$. Furthermore, they show that to rule out any one of the rational values, it suffices to show the non-speciality of a finite number of ``candidate'' triples (we are using the word ``triple'' differently here than in \cite{MR2574368}). Hence, if enough candidate triples are shown to be non-special, one can produce a lower bound arbitrarily close to the conjectured value of $\frac 1{\sqrt r}$. If a candidate triple is special, then it is a counter-example to Nagata's Conjecture.

\begin{example}
\begin{figure}
\centering
\includegraphics{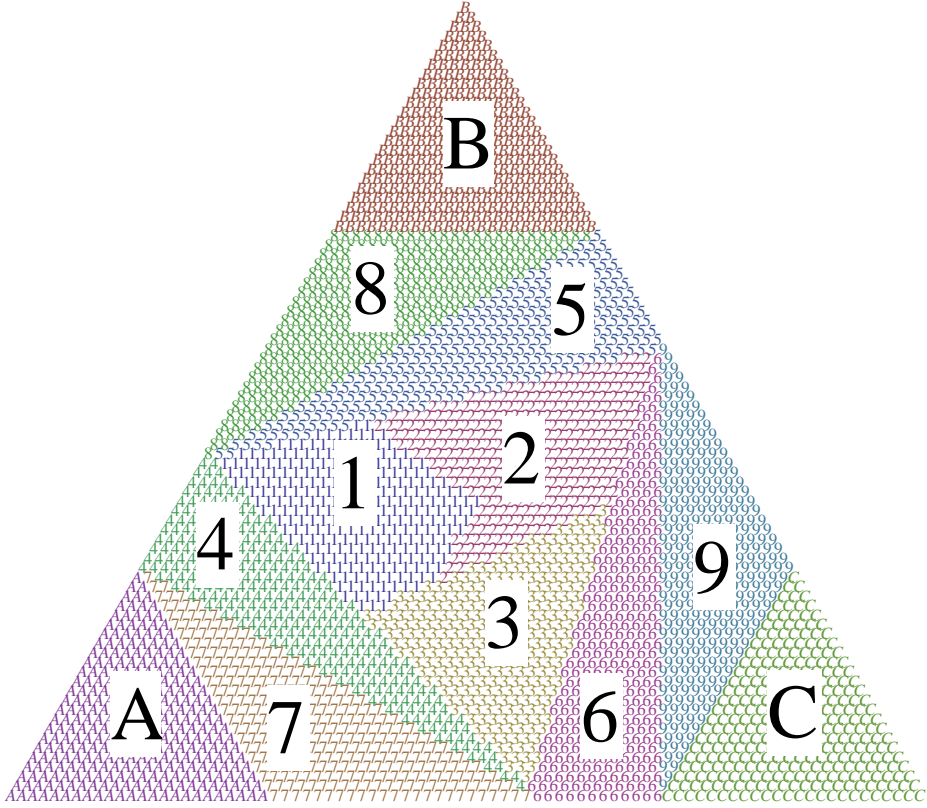}
\caption{A full reduction algorithm for $(2,D(83),24^{\times12})$. To denote points in $D_i\smallsetminus D_{i-1}$, we use $i$'s for $i=1,\ldots,9$, and we use $A$'s, $B$'s and $C$'s for $i=10,11,12$ respectively.}\label{12casefig}
\end{figure}
$(2,83,24^{\times 12})$ is one of the candidate triples of which it is necessary to show non-speciality to prove $\epsilon(12)\neq\frac{83}{288}$. We do so by showing that 
$$\dim V^2(83,24^{\times 12})=\edim(2,D(83),24^{\times 12})=0.$$
We use Algorithm \ref{algorithm1} with the following orderings: 
\begin{eqnarray*}
\big(\lex(0,1,2),\lex(1,2,0),\lex(2,0,1),\lex(0,1,2),\lex(1,2,0),\lex(2,0,1),\;\\
\;\;\lex(0,1,2),\lex(1,2,0),\lex(2,0,1),\lex(0,2,1),\lex(1,0,2),\lex(1,0,2)\big).
\end{eqnarray*}
For $i=12,\ldots,2$, $\#(D_i\smallsetminus D_{i-1})={25\choose2}=300$, which is the maximum possible. $\#D_1=270$ and $D_0=\varnothing$. Hence, we obtain a reduction algorithm as pictured in Figure \ref{12casefig}.
\end{example}

While the above example was computed by hand, we used a Mathematica program to systematically perform Algorithm \ref{algorithm1} on a number of candidate triples using a random collection of $r$-tuples of monomial orderings. We summarize our results in Figure \ref{lowerbounds}. As a measure of how close a bound $e$ is to the conjectured value $\frac{1}{\sqrt r}$ of $\epsilon(r)$, we use the $f$-value of our bound (as in \cite{MR2574368}) defined by
$$e=\frac{1}{\sqrt r}\sqrt{1-\frac{1}{f}}.$$
A larger $f$-value corresponds to a better bound. In most cases tested, we were able to quickly produce the best known bounds on $\epsilon(r)$.

\begin{figure}
$$\begin{array}{|c|c|c|cc|c|}
\hline
&\multicolumn{2}{|c|}{\text{Using Algorithm \ref{algorithm1}}}&\multicolumn{3}{|c|}{\text{Previous best known}}\\
\cline{2-6}
r&\epsilon(r)\geq&f(r)\geq&\multicolumn{2}{|c|}{\epsilon(r)\geq}&f(r)\geq\\
\hline
10&60/19 & 361& 117/370&\cite{MR2738381}&13690\\
11&169/561&572.22&106/352&\cite{MR2574368}&402.28\\
12&277/960& 1081.69&83/288&\cite{MR2574368}&300.52 \\
13&191/689&1014.36&90/325&\cite{MR2574368}&325 \\
14&187/700& 1129.03&86/322&\cite{MR2574368}&740.6 \\
15&484/1875&1969.54&426/1651&\cite{MR2543429}&744.55 \\
17&305/1258&1389.43 &136/561&\cite{MR1687571}&1089 \\
18&509/2160& 2178.15&89/378&\cite{MR2574368}&466.94 \\
19&584/2546& 3158.93&170/741&\cite{MR1687571}&28900 \\
20&948/4240&5107.27&1617/7235&\cite{MR2543429}& 1017.5\\
21&559/2562& 3765.83&142/620&\cite{MR2574368}&660.64 \\
22&1074/5038& 5104.88&197/924&\cite{MR1687571}&38809 \\
23&820/3933& 4703.1&115/552&\cite{MR2574368}&576 \\
24&578/2832& 3632.35&8092/39657&\cite{MR2543429}& 1371.71\\
26&673/3432& 4768.67&260/1326&\cite{MR1687571}&2601 \\
27&239/1242& 5193.82&161/837&\cite{MR2574368}&997.96 \\
28&582/3080& 4457.89&201/1064&\cite{MR2574368}&1304.25 \\
29&350/1885& 4901&113/609&\cite{MR2574368}&639.45 \\
30&586/3210&4641.49& 219/1200&\cite{MR2574368}&2130.76 \\
31&746/4154&4638.63 &128/713&\cite{MR2574368}&1093.26 \\
32&724/4096&4681.14 &147/832&\cite{MR2574368}&940.52 \\
33&471/2706&4350.82 &178/1023&\cite{MR2574368}&1093.55 \\
34&653/3808&4902.25 &239/1394&\cite{MR2574368}&1731.93 \\
35&840/4970&5041 &136/805&\cite{MR2574368}&974.47 \\
37&444/2701&5329 &444/2701&\cite{MR1687571}&5329 \\
38&715/4408&4964.35 &265/1634&\cite{MR2574368}&1898.97 \\
39&843/5265&5641.07 &231/1443&\cite{MR2574368}&1779.7 \\
40&449/2840&5170.26 &196/1240&\cite{MR2574368}&1601.66 \\
41&493/3157&6077.22 &160/1025&\cite{MR1989646,MR2574368}&1025 \\
42&985/6384&6785.79 &149/966&\cite{MR2574368}&1306.94 \\
43&1036/6794&6881.1 &236/1548&\cite{MR2574368}&1741.5 \\
44&650/4312&5560.21 &252/1672&\cite{MR2574368}&1985.5 \\
45&872/5850&6556.03& 275/1845&\cite{MR2574368}&3782.25 \\
46&746/5060&6626.19 &217/1472&\cite{MR2574368}&3140.26 \\
47&473/3243&5888.61&994/6815&\cite{MR2574368}&7109.17 \\
48&575/3984&7035.57 &187/1296&\cite{MR2574368}&1521.39 \\
50&601/4250&7372.45&700/4950&\cite{MR1687571}&9801 \\
\hline
\end{array}$$
\caption{Lower bounds on multi-point Seshadri constants of $\proj^2$ found using Algorithm \ref{algorithm1} and the previously best known lower bounds with citations. Notice that our bounds are equal to or better than the previous best for all $r$ except $r=10,19,22,47,50$}\label{lowerbounds}
\end{figure}

\subsection{Non-speciality of $\left(n,D(d),m^{\times s^n}\right)$}

Here we indicate how our methods may be used to recover a theorem of Evain confirming and strengthening certain cases of Iarrobino's Conjecture from \cite{MR1337187}. 

Iarrobino's conjecture states that, apart from a few known counter-examples, if $d^n<rm^n$, then $(n,D(d),m^{\times r})$ is non-special. Notice that the $2$-dimensional case is Nagata's Conjecture. Theorem \ref{nagatagen}, originally proved by Evain in \cite{MR2125451}, confirms the conjecture, in fact with weak inequality, for the case where $r=s^n$ for some $s\in\N$.

\begin{theorem}\label{nagatagen}
Suppose $n\geq2$, $s\geq\max\{2,6-n\}$, $m\geq1$, and $d\leq ms$. Then $\left(n,D(d),m^{\times s^n}\right)$ is non-special. In particular, the linear series of degree-$d$ hypersurfaces in $\proj^n$ based at $s^n$ general points with multiplicity at least $m$ is empty.
\end{theorem}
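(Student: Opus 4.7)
The plan is to construct a full generalized reduction algorithm for the triple $(n, D(d), m^{\times s^n})$ and then invoke Theorem \ref{genredalgtheorem}. The core tool is Proposition \ref{skewsimplex}, which supplies a plentiful class of non-special sets (subsets of scrambled $n$-simplices of size $m$) to use as building blocks, together with Theorem \ref{corfirst} to verify that the chosen subsets are in fact reductions.

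The construction is as follows. First, I would reduce to the boundary case $d = ms$: when $d < ms$, view $D(d) \subseteq D(ms)$ and, at each step, take an auxiliary set $G_i := D_i \cup (D(ms) \smallsetminus D(d))$ so that Corollary \ref{gaug} applies. Then for $d = ms$, I would partition $D(ms)$ into $s^n$ disjoint cells by slicing along the hyperplanes $\{a_i = jm : 0 \leq i \leq n,\ 1 \leq j \leq s-1\}$. With a suitable half-open convention on the shared boundaries, each of the $s^n$ resulting cells contains exactly $\binom{m+n-1}{n}$ lattice points, and each is a scrambled $n$-simplex of size $m$ (the rows in the coordinate direction anchored at the corresponding corner have sizes $1, 2, \ldots, m$). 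Any lattice points left uncovered by the cell partition form the residual set $D_0$, whose cardinality I claim matches $\edim(n, D(d), m^{\times s^n})$ exactly.

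The cells are then peeled off sequentially $C_1, C_2, \ldots, C_{s^n}$ in an order compatible with a sequence of monomial orderings $\preceq_1, \ldots, \preceq_{s^n}$, each $\preceq_i$ chosen (from among the several lex- and rlex-orderings, as in Algorithm \ref{algorithm1}) to be adapted to the corner of $D_i$ at which $C_i$ is anchored. At each step, I would show that $C_i$ is the $\preceq_i$-lex-minimal element of $\mathcal F\!\left(D_i, \binom{m+n-1}{n}, m\right)$; by Theorem \ref{corfirst}, this guarantees $C_i$ is a reduction. Chaining these together yields the full generalized reduction algorithm, and Theorem \ref{genredalgtheorem} concludes non-speciality.

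The main obstacle will be the combinatorial verification that each $C_i$ is $\preceq_i$-minimal in $\mathcal F$ at stage $i$. This is a higher-dimensional analogue of the bean-counting argument in the proof of Lemma \ref{redlemma}: any candidate $F \in \mathcal E(D_i, \#C_i)$ that is $\preceq_i$-smaller than $C_i$ must concentrate too many of its points in the first few parallel hyperplanes away from $C_i$'s anchor corner, and thus fall into the ``special'' regime of Condition \ref{anynno1} of Lemma \ref{anynaf}. The hypothesis $s \geq \max\{2, 6 - n\}$ enters at precisely this point: it guarantees that $D(ms)$ has enough room in each corner direction to fit the $s^n$ disjoint scrambled simplices in a peel-order that maintains lex-minimality at every step. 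For smaller $(n, s)$ the corners of $D(ms)$ interact in a way that breaks the bean-counting, reflecting the genuine boundary of the Iarrobino/Nagata conjecture's currently accessible range.
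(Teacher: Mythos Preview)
Your approach is genuinely different from the paper's, and it contains a structural gap that I do not see how to repair. The claim that slicing $D(ms)$ by the coordinate hyperplanes $\{a_i = jm\}$ produces $s^n$ cells, each a scrambled $n$-simplex of size $m$, already fails for $n=3$, $s=2$: the four planes $a_i = m$ cut the tetrahedron $D(2m)$ into four corner tetrahedra and one central octahedron, five cells rather than eight, and the octahedron is not (a subset of) a scrambled $3$-simplex of size $m$. No half-open convention fixes this, because the cell complex itself has the wrong combinatorics. Even in dimension $2$ the count is off: an ``upward'' cell and a ``downward'' cell cannot both carry exactly $\binom{m+1}{2}$ lattice points under any consistent half-open rule, and the downward cells are not anchored at any corner of the running $D_i$, so it is unclear which monomial ordering would exhibit them as $\preceq$-minimal in $\mathcal F$. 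Finally, your reduction from $d<ms$ to $d=ms$ via Corollary~\ref{gaug} is misstated: $D(d)$ and $D(ms)$ are disjoint subsets of $\mathbb N^{n+1}$ (they lie on different hyperplanes $|\mathbf a|=d$), so the inclusion $D(d)\subseteq D(ms)$ makes no sense here.

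The paper proceeds quite differently, and in the opposite logical direction. It first handles the strict case $d<ms$ (Proposition~\ref{strictprop}) by constructing a \emph{fully exceptional partition} directly, not a sequential reduction: it uses $\binom{n+1}{2}$ families of hyperplanes of the form $\sum_{k=i+1}^{j} a_k = c\mu$ with an irrational $\mu$ slightly larger than $d/s$, so that no lattice point sits on a wall and every cell is automatically contained in a scrambled $n$-simplex of size $m$; exceptionality is then checked by a centroid-uniqueness argument rather than via Theorem~\ref{corfirst}. For the boundary case $d=ms$ the paper does \emph{not} attempt a global grid. Instead it inducts on $n$: peel off the slab $D_1=\{a_n\le m\}$, lift an exceptional partition of $D_1$ from the $(n{-}1)$-dimensional inductive hypothesis (this accounts for $s^n-(s-1)^n$ parts), and observe that the complement is a translate of $D(sm-m-1)$, to which the strict-case Proposition~\ref{strictprop} applies with $(s-1)^n$ parts. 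The hypothesis $s\ge\max\{2,6-n\}$ enters through the base cases of this induction (Lemma~\ref{basecaselem}), not through a bean-counting minimality check.
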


We start with the case where $d$ is strictly less than $ms$, and then sketch how to extend this to to the case of equality---first in some base cases, and then by induction on $n$.

\begin{proposition}\label{strictprop}
If $n\geq1$, $s\geq1$, $m\geq1$, and $d<ms$, then $\left(n,D(d),m^{\times s^n}\right)$ admits a fully exceptional partition. Moreover, each part of this partition is a subset of a scrambled $n$-simplex of size $m$.
\end{proposition}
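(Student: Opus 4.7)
The plan is to construct the fully exceptional partition explicitly via a combinatorial subdivision of the lattice simplex $D(d)$ modeled on the subdivision of an $n$-simplex of side $s$ into $s^n$ unit sub-simplices. The basic building block: for each $\mathbf{q} = (q_0,\ldots,q_n) \in \N^{n+1}$ with $\sum q_i \leq s-1$, define $B_{\mathbf{q}} = \{\mathbf{a} \in D(d) : \lfloor a_i/m\rfloor = q_i \text{ for all } i\}$. The hypothesis $\sum a_i = d < ms$ forces $\sum \lfloor a_i/m\rfloor \leq d/m < s$ for every $\mathbf{a}$, so these boxes genuinely partition $D(d)$ into at most $\binom{s+n}{n+1}$ nonempty pieces. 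When this count falls short of $s^n$ (as for $n\geq 3$), I would pad with empty parts to reach $s^n$; when it exceeds $s^n$, or when an individual box is too large to sit inside a scrambled $n$-simplex of size $m$ (this can happen when $T := d - m\sum q_i$ sits near the middle $(n+1)(m-1)/2$), I would refine overfull boxes using an auxiliary monomial ordering and Theorem \ref{corfirst} to peel off maximal scrambled sub-simplices, absorbing leftover points into adjacent corner boxes.

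To verify each part $E_i$ is contained in a scrambled $n$-simplex of size $m$, I would slice $B_{\mathbf{q}}$ by the $m$ hyperplanes $\{a_0 = q_0m + j\}$ for $j = 0,\ldots,m-1$; each slice is itself a lower-dimensional box of the same form inside $D^{n-1}(d - q_0m - j)$. An induction on $n$ (with base case $n=1$ being a consecutive interval of size at most $m$) shows the slices have the row-size profile required by the recursive definition of a scrambled $n$-simplex, giving the desired containment. Then by Proposition \ref{skewsimplex}, $(n, E_i, (m))$ is nonspecial, so $M_{E_i}(m)$ has full rank $\#E_i$; choosing $U_i' \subseteq U_i$ of size $\#E_i$ making $(U_i', E_i)$ non-singular produces $\sigma(\mathbf{E}) \neq 0$, establishing condition~(1) of Definition \ref{exceptional}.

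The main obstacle is establishing the uniqueness condition~(2) of Definition \ref{exceptional}: for any distinct partition $\mathbf{F}$ of some $D'' \subseteq D(d)$ with $\#F_i = \#E_i$ and matching centroids $\mathbf{a}_i(\mathbf{F}) = \mathbf{a}_i(\mathbf{E})$, I must show $\sigma(\mathbf{F}) = 0$. My plan is a rigidity argument: the centroid of $E_i$ identifies the box $B_{\mathbf{q}}$ in which it lies, because centroids of distinct boxes are separated by integer multiples of $m$ along each coordinate axis, and within a box the row-size profile forced by the scrambled-simplex condition combined with the prescribed centroid pins down the particular lattice subset. I would formalize this by induction on $n$, grounded in the observation that in dimension one a consecutive interval of prescribed cardinality is uniquely determined by its sum. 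Propagating the rigidity through the hyperplane slices, and controlling the behavior of the refined (non-box) pieces so that no centroid-preserving reshuffling survives, will be the delicate combinatorial heart of the argument.
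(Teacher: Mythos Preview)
Your construction has a genuine gap: cutting $D(d)$ by the condition $\lfloor a_i/m\rfloor=q_i$ produces boxes that can be strictly larger than any scrambled $n$-simplex of size $m$, so the scrambled-simplex containment you need simply fails before any refinement. Concretely, take $n=2$, $m=4$, $s=2$, $d=5$. Your box $B_{(0,0,0)}=\{(a_0,a_1,a_2):a_i\le 3,\ a_0+a_1+a_2=5\}$ contains $12$ lattice points, while a scrambled $2$-simplex of size $4$ has only $\binom{5}{2}=10$ points; no choice of parallel lines can rescue this. You acknowledge the overfull problem and propose to ``refine overfull boxes using an auxiliary monomial ordering and Theorem~\ref{corfirst}\ldots absorbing leftover points into adjacent corner boxes,'' but this is where the argument has to begin, not end: after such surgery the parts are no longer boxes, and both your inductive verification of the scrambled-simplex condition and your centroid-rigidity argument for condition~(2) of Definition~\ref{exceptional} rely on the clean box structure that has now been destroyed. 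There is also a counting mismatch you wave at but do not resolve: for $n=2$ and $s\ge 3$ the number $\binom{s+n}{n+1}$ of boxes already exceeds $s^n$, so padding with empty parts goes the wrong direction.

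The paper's proof avoids all of this with one idea you are missing: do not cut at multiples of $m$, but at multiples of an \emph{irrational} $\mu=d/s+\delta$ with $0<\delta<m-d/s$, so that $\mu<m$. Moreover, the cuts are not axis-parallel but are the full family of hyperplanes $\sum_{k=i+1}^{j}a_k=c\mu$ for $0\le i<j\le n$ and $1\le c<s$. This is the standard alcove subdivision of the dilated $n$-simplex into exactly $s^n$ small simplices; irrationality of $\mu$ guarantees no lattice point sits on a wall, and $\mu<m$ guarantees each small simplex has side $<m$ and hence lies inside a scrambled $n$-simplex of size $m$. In the example above this yields four pieces of sizes $6,6,6,3$, each visibly a scrambled triangle. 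The exceptionality then follows from a short recursive argument on the alcove structure. Your $\lfloor a_i/m\rfloor$ boxes are a coarser, differently-shaped decomposition that does not enjoy these properties, and the patching you sketch is not a proof.
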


\begin{proof}[Sketch of Proof]
Choose some irrational $0<\delta<m-\frac{d}{s}$, and let $\mu=\frac{d}{s}+\delta$. Then consider hyperplanes in $\tilde D(d)$ of the form
$$H_{c,i,j}=\left\{\mathbf a\in\tilde D(d)\left|\sum_{k=i+1}^j a_k=c\mu\right.\right\},\text{ for integers $1\leq c<s$, $0\leq i<j\leq n$}.$$
These hyperplanes will partition $D(d)$ into $s^n$ parts as $\mathbf E=(E_1,\ldots,E_{s^n})$. Because $\mu$ is irrational, no integral points will lie on the hyperplanes. And because $\mu<m$ and the arrangement of the hyperplanes, each $E_i$ is a subset of a scrambled $n$-simplex of size $m$. And finally, one can use a simple recursive argument to show that $\mathbf E$ is (fully) exceptional.
\end{proof}

\begin{example}\label{strictex}
In Figure \ref{strictfig}, we demonstrate Proposition \ref{strictprop} for the triple $(2,D(11),3^{\times16})$ by exhibiting the prescribed exceptional partition.
\begin{figure}
\centering
\includegraphics{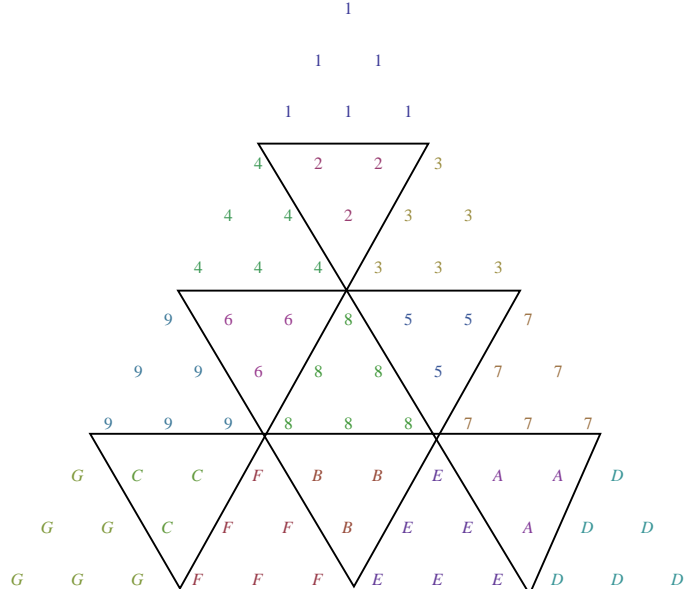}
\caption{An fully exceptional partition for $(2,D(11),3^{\times16})$. $E_i$ is denoted by an $i$ for $i=1,\ldots,9$ and $A,\ldots,G$ for $10,\ldots,16$ respectively.}\label{strictfig}
\end{figure}
\end{example}

Intuitively, the addition of $\delta$ in the proof is designed to eliminate the possibility that points of $D(d)$ lie on the hyperplanes---to avoid ``borderline'' points. When $d=ms$, we have no ``buffer'', so adding $\delta>0$ may give us parts of our partition which are too big and therefore not a subset of a scrambled $n$-simplex of size $m$. So when $d=ms$, we use almost the same construction for our partitions, but we make careful choices about where to send the borderline points (compare Examples \ref{strictex} and \ref{knownnagataex}). In the cases excluded by Theorem \ref{nagatagen}, it is not possible to make these choices and end up with an exceptional partition, but in all other cases it is. We do this explicitly for our base cases, which sets us up for induction on $n$.

\begin{lemma}\label{basecaselem}
The triple $\left(n,D(sm),m^{\times s^n}\right)$ admits an exceptional partition in which each part is a subset of a scrambled $n$-simplex of size $m$ in the cases:
\begin{enumerate}
\item $n=2$, $s\geq4$
\item $n=3$, $s=3$
\item $n=4$, $s=2$.
\end{enumerate}
\end{lemma}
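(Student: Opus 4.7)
The strategy is to reuse the hyperplane construction of Proposition \ref{strictprop} at the degenerate value $\mu = m$ and then carefully assign the borderline integer points (those landing on some $H_{c,i,j}$) to neighbouring chambers. More precisely, I would first take the $\binom{n+1}{2}(s-1)$ hyperplanes $H_{c,i,j} = \{\mathbf a \in \tilde D(sm) \mid \sum_{k=i+1}^j a_k = cm\}$ with $1 \leq c < s$ and $0 \leq i < j \leq n$; these cut $\tilde D(sm)$ into $s^n$ open chambers whose interior integer points are automatically contained in scrambled $n$-simplices of size $m$ by the same argument as in the proof of Proposition \ref{strictprop}. What remains is to distribute the borderline points among adjacent chambers so that each enlarged part $E_i$ is still contained in such a scrambled simplex and so that the resulting partition is exceptional.

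For each of the three cases I would exhibit an explicit assignment. When $n=2$ and $s \geq 4$, the chambers form a triangular grid of $s^2$ small triangles (alternating orientation); each borderline lattice point lies either on the interior of a grid edge or at a grid vertex and can be placed in one incident chamber by a uniform cyclic rule, with the slack $s \geq 4$ leaving enough room that every enlarged part still fits inside a scrambled $2$-simplex of size $m$. For $n=3,\,s=3$ and $n=4,\,s=2$ the number of chambers is small enough ($27$ and $16$ respectively) that I would simply write the assignment down directly and check by hand that each $E_i$ fits inside a scrambled $n$-simplex of size $m$. In every case the non-vanishing $\sigma(\mathbf E) \neq 0$ then follows from Proposition \ref{skewsimplex}.

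The main difficulty will be verifying condition (\ref{unique}) of Definition \ref{exceptional}. My plan is to fix a monomial ordering $\preceq$ adapted to the combinatorial symmetry of the chamber arrangement and argue that $\mathbf E$ is the unique partition with its multiset of per-part centroids: any alternative $\mathbf F \neq \mathbf E$ satisfying $\mathbf a_i(\mathbf F) = \mathbf a_i(\mathbf E)$ for every $i$ would have to move a borderline point across one of the hyperplanes $H_{c,i,j}$ in a way that over-fills some parallel row (or $(n-1)$-plane) inside one of the enlarged parts, which by Proposition \ref{af} (Condition \ref{no}) or its higher-dimensional analogue in Lemma \ref{anynaf} forces the corresponding factor $\det(U_i', F_i)$ to vanish, so $\sigma(\mathbf F) = 0$. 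The same centroid--hyperplane accounting also explains why the cases excluded from the lemma ($n = 2$ with $s \leq 3$ and $n = 3$ with $s = 2$) resist this strategy: in those regimes the chambers are too narrow to absorb all borderline points without over-filling some line or plane from the start, so no consistent assignment yields a partition into scrambled $n$-simplices of size $m$ at all---this is the combinatorial shadow of the well-known exceptional cases of Iarrobino's conjecture.
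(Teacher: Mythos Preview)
Your strategy differs from the paper's in a way that matters. The paper does \emph{not} work directly at the degenerate value $\mu=m$ and then try to distribute the borderline points among the $s^2$ chambers. Instead, for $n=2$ and $s\geq4$, it peels off a ``border'': the first $6s-9$ parts of the partition are chosen explicitly around the perimeter of $D(sm)$ so that their complement is a translated copy of $D(sm-3m-3)$. Since $sm-3m-3<(s-3)m$ strictly, the remaining $(s-3)^2$ parts are then furnished by Proposition~\ref{strictprop} itself, where the irrational $\mu$ and the absence of borderline points make exceptionality easy. For $n=3,s=3$ and $n=4,s=2$ the paper, like you, simply writes things down. The gain of the paper's approach is that the exceptionality of the interior partition is inherited for free from the strict case; one only has to check the border pieces, and the illustration in Example~\ref{knownnagataex} makes clear why those are forced.

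The gap in your proposal is the verification of condition~(\ref{unique}). Your claim that any competing $\mathbf F$ with the same per-part sums must ``move a borderline point across some $H_{c,i,j}$ in a way that over-fills a parallel row'' is not justified and is not obviously true. Centroid-preserving rearrangements need not involve borderline points at all: one can swap interior points between parts in ways that keep every $\mathbf a_i$ fixed, and ruling all of these out requires more than an appeal to Lemma~\ref{anynaf}. In the strict case the paper disposes of this with a recursive argument that relies precisely on the fact that no integer point lies on any $H_{c,i,j}$ (so the chambers are rigidly determined by the centroid data); once you set $\mu=m$ that rigidity is lost, and your ``uniform cyclic rule'' would have to be specified and then shown to restore it. That is the real work, and your sketch does not do it. The paper's border-and-reduce trick sidesteps exactly this difficulty.
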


\begin{proof}[Sketch of Proof]
For the case of $n=2$, $s\geq4$, we show that one is able to reduce to Proposition \ref{strictprop}. We construct the partition in two steps. The first $6s-9$ parts form a ``border'' around $D(sm)$, whose complement is a translation of $D(sm-3m-3)$. Then the remaining $(s-3)^2$ parts can then by constructed using Proposition \ref{strictprop} since $sm-3m-3<(s-3)m$. Rather than giving all of the details, we refer the reader to Example \ref{knownnagataex} for an illustration.

For the other two cases, explicit descriptions of $\mathbf E$ are also possible. 
\end{proof}

\begin{example}\label{knownnagataex}
In Figure \ref{knownnagatafig} we demonstrate the first part of Lemma \ref{basecaselem} by illustrating the first step in the prescribed full generalized reduction algorithm for $\left(2, D(15),3^{\times 25}\right)$. We see that the partition is exceptional because no other sextuple of points has the same centroid as $E_1$, no pair of disjoint sextuples of points $F_2$, $F_3$ with $\sigma(F_2),\sigma(F_3)\neq0$, have the same centroids as $E_2$ and $E_3$ respectively, etc. The remaining points form a translation of $D(3)$, and by Proposition \ref{strictprop}, there exists an exceptional partition of $(2,D(3),(3^{\times4}))$. Notice that the key to making this construction work is that $s\geq4$ so that we can reduce to Proposition \ref{strictprop}.
\begin{figure}
\centering
\includegraphics{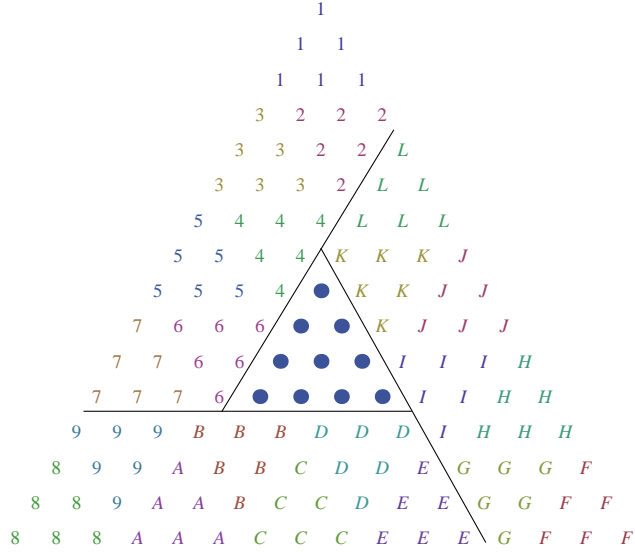}
\caption{An exceptional partition of $\left(2,D(15),3^{\times 21}\right)$.}\label{knownnagatafig}
\end{figure}
\end{example}

Finally we prove the inductive step, which gives rise to the theorem.

\begin{proof}[Sketch of Proof of Theorem \ref{nagatagen}]
The case of $d<sm$ already being covered, we fix $m$ and $s$, let $d=sm$, and induct on $n$. Given the base cases from Lemma \ref{basecaselem}, this will prove the theorem.

We again construct the partition $\mathbf E$ in two steps. The first $s^n-(s-1)^n$ parts will partition $D_1=\{\mathbf a\in D(d)|a_n\leq m\}$. The complement $D(d)\smallsetminus D_1$ is then a translation of $D(sm-m-1)$, of which we can construct the remaining $(s-1)^n$ parts of the partition by Proposition \ref{strictprop}.

So it remains to construct a partition of $D_1$ with each of the $s^n-(s-1)^n$ parts a subset of a scrambled $n$-simplex of size $m$. Let $D'=\{\mathbf a\in D^n(d)|a_n=0\}$. Then considering $D'$ as a subset of $\N^n$, we see that it is exactly $D^{n-1}(sm)$. By the inductive hypothesis, there exists an exceptional partition $(E'_1,\ldots,E_{s^{n-1}}')$ of $D'$ with each $E'_k$ a subset of a scrambled $(n{-}1)$-simplex of size $m$.

Let $\pi$ be the projection of $D^n(sm)$ onto $D'$ via $(a_0,\ldots,a_n)\mapsto(a_0,\ldots,a_{n-1},0)$. Let $\hat E'_k=\pi^{-1}(E'_k)\cap D_1$. Each $\hat E'_k$ then has the form
$$\hat E'_k\subseteq (\text{a scrambled $(n{-}1)$-simplex of size $m$})\times\{0,\ldots,m\}.$$
We can subdivide the set on the right-hand side into subsets of scrambled $n$-simplices of size $m$, which will induce a partition of $\hat E'_k$. Collecting all such parts from all of the $\hat E'_k$, we end up with the desired full exceptional partition of $D_1$.
\end{proof}
\bibliographystyle{amsplain}
\bibliography{NagataBib}
\end{document}